\theoremstyle{plain}
\newtheorem{theorem}{Theorem}[section]
\newtheorem{lemma}[theorem]{Lemma}
\newtheorem{corollary}[theorem]{Corollary}
\newtheorem{definition}{Definition}[section]
\newtheorem{remark}{Remark}[section]
\newtheorem{fact}[theorem]{Fact}
\theoremstyle{definition}
\newcommand{\la}{\langle}
\newcommand{\ra}{\rangle}
\newcommand{\nexto}{\kern -0.54em}
\newcommand{\dZ}{{\cal Z \kern -0.7em Z}}
\newcommand{\dC}{{\rm\hbox{C \kern-0.8em\raise0.2ex\hbox{\vrule height5.4pt width0.7pt}}}}
\newcommand{\dQ}{{\rm\hbox{Q \kern-0.85em\raise0.25ex\hbox{\vrule height5.4pt width0.7pt}}}}
\newcommand{\lqqd}{\hfill{$\Box$}\bigskip}
\newcommand{\NN}{\mathbb{N}}
\newcommand{\HH}{\mathcal{H}}
\newcommand{\RR}{\mathbbm{R}}
\newcommand{\dsty}{\displaystyle}
\begin{document}
\title{On proximal subgradient splitting method for minimizing the sum of two nonsmooth convex functions\footnote{This work was
partially supported by CNPq grants 303492/2013-9, 474160/2013-0 and
202677/2013-3 and by projects CAPES-MES-CUBA 226/2012 and UNIVERSAL FAPEG/CNPq.} }
\author{Jos\'e Yunier Bello Cruz\footnote{Institute of Mathematics and Statistics,
		Federal University of Goi\'as, Goi\^ania, Avenida Esperança, s/n Campus Samambaia,  Goi\^ania, GO, 74690-900, Brazil.
		E-mail: yunier@impa.br \&
yunier@ufg.br.} } \maketitle \vspace{6.0mm}
\begin{abstract}
\noindent In this paper we present a variant of the proximal
forward-backward splitting iteration for solving nonsmooth optimization
problems in Hilbert spaces, when the objective function is the sum
of two nondifferentiable convex functions. The proposed iteration,
which will be called Proximal Subgradient Splitting Method,
extends the classical subgradient iteration for important
classes of problems, exploiting the additive structure of the
objective function. The weak convergence of the generated sequence
was established using different stepsizes and under suitable
assumptions. Moreover, we analyze the complexity of the iterates.

\bigskip

\noindent{\bf Keywords:} Convex problems; Nonsmooth optimization
problems; Proximal forward-backward splitting iteration; Subgradient
method.

\medskip

\noindent{\bf Mathematical Subject Classification (2010):} 65K05,
90C25, 90C30.
\end{abstract}

\section{ Introduction}
The purpose of this paper is to study the convergence properties of
a variant of the proximal forward-backward splitting method for
solving the following optimization problem:
\begin{equation}\label{prob}
\min \,f(x)+g(x) \;\mbox{ s.t. } \;x\in \HH,
\end{equation} where $\HH$ is a nontrivial real Hilbert space, and $f:\HH\rightarrow
\overline{\RR}:=\RR \cup
\{+\infty\}$ and $g:\HH\rightarrow\overline{\RR}$ are two proper lower
semicontinuous and convex functions. We are interested in the case
where both functions $f$ and $g$ are nondifferentiable, and when the
domain of $f$ contains the domain of
$g$. The solution set of this problem will be denoted by $S_*$,
which is a closed and convex subset of the domain of $g$. Problem
\eqref{prob} has recently been received much attention from the optimization community due to
its broad applications to several different areas such as
control, signal processing, system identification, machine learning
and restoration of images; see, for instance,
\cite{neal-boyd,comb-2005,comb-2011, geman} and the references
therein.

A special case of problem \eqref{prob} is the nonsmooth constrained
optimization problem, taking $g=\delta_C$ where $\delta_C$ is the
indicator function of a nonempty closed and convex set $C$ in $\HH$,
defined by $\delta_C(y):=0$, if $y\in C$ and $+\infty$, otherwise.
Then, problem \eqref{prob} reduces to the constrained minimization
problem
\begin{equation}\label{rest-opt}
\min\; f(x) \;\mbox{ s.t. } \;x\in C.
\end{equation}
Another important case of problem \eqref{prob}, which has had much
interest in signal denoising and data mining, is the following
optimization problem with $\ell_1$-regularization
\begin{equation}\label{l1-reg}
\min\; f(x)+\lambda \|x\|_1\;\mbox{ s.t. } \;x\in \HH,
\end{equation} where $\lambda>0$ and the norm $\|\cdot\|_1$ is used to induce the
sparsity in the solutions. Moreover, problem \eqref{l1-reg} covers
the important and well studied Low-Rank problem, when $\HH=\RR^n$
and $f(x)=\|Ax-b\|_2^2$ where $A\in \RR^{m\times n}$, $m<<n$, and
$b\in \RR^m$, which is just a convex approximation of the very
famous $\ell_0$ minimization problem; see \cite{candes-tao}.
Recently, this problem became popular in signal processing and
statistical inference; see, for instance, \cite{Tropp,Fig}.

We focus here on the so-called {\em proximal
forward-backward splitting iteration} \cite{neal-boyd}, which
contains a forward gradient step of $f$ (an explicit step) followed
by a backward proximal step of $g$ (an implicit step). The main idea
of our approach consists of replacing, in the forward step of the
proximal forward-backward splitting iteration, the gradient of $f$
by a subgradient of $f$ (note that here $f$ is assumed
nondifferentiable in general). In the particular case that $g$ is
the indicator function, the proposed iteration reduces to the
classical projected subgradient iteration.

To describe and motivate our iteration, first we recall the definition of the
so-called {\em proximal operator} as ${\bf prox}_g: \HH\rightarrow \HH$ associated to $g$ a proper lower semicontinuous convex function, where
${\bf prox}_g(z)$, $z\in \HH$ is the unique solution of the following strongly convex
optimization problem
\begin{equation}\label{prox}
\min\; g(y)+\frac{1}{2}\|y-z\|^2  \;\mbox{ s.t. } \;y\in \HH.
\end{equation}
Note that the norm $\|\cdot\|$ is induced by this inner product of $\HH$, \emph{i.e.}, $\|x\|:=\sqrt{\la x,x\ra}$ for all $x\in\HH$. The proximal operator ${\bf prox}_g$ is well-defined and has many
attractive properties, \emph{e.g.},  it is continuous and firmly
nonexpansive, \emph{i.e.}, for all $x,y\in\HH$, $\|{\bf prox}_g(x)-{\bf prox}_g(y)\|^2\le \|x-y\|^2-\|[x-{\bf prox}_g(x)]-[y-{\bf prox}_g(y)]\|^2$. This nice property can be used to construct algorithms to solve optimization problems \cite{rock-1976}; for other properties and algebraic rules see
\cite{comb-2005,comb-2011, librobauch}. If $g=\delta_C$ is the
indicator function, the orthogonal projection onto $C$,
${\bf P}_C(x):=\left\{y\in C : \|x-y\|={\rm dist}(x,C)\right\}$ is the same
as ${\bf prox}_{\delta_C}(x)$ for all $x\in \HH$ \cite{bb}; For an exhaustive discussion about the evaluation of the proximity operator of a wide variety of functions see Section $6$ of \cite{neal-boyd}. Now, let us recall the definition of the
subdifferential operator $\partial g:\HH\rightrightarrows\HH$ by $
\partial g(x):=\left\{w\in \HH : g(y)\geq g(x)+\la w,y-x\ra,\; \forall\, y\in
\HH\right\}.$ We also present the relation of the proximal operator ${\bf
prox}_{\alpha g}$ with the subdifferential
operator $\partial g$, \emph{i.e.}, ${\bf prox}_{\alpha g}=({\rm Id}+\alpha
\partial g)^{-1}$ and as a direct consequence of the
first optimality condition of \eqref{prox}, we have the following useful
inclusion:
\begin{equation}\label{in-sub}
\frac{z-{\bf prox}_{\alpha g}(z)}{\alpha}\in \partial g({\bf
prox}_{\alpha g}(z)),
\end{equation} for any $z\in \HH$ and
$\alpha>0$. The iteration proposed in this paper, called {\em Proximal
Subgradient Splitting Method}, is
motivated by the well-known fact that $x\in S_*$ if and only if there
exists  $u\in
\partial f(x)$ such that $x={\bf prox}_{\alpha g}(x-\alpha u)$. Thus, the iteration generalizes the proximal forward-backward
splitting iteration for the differentiable case, as a fixed point
iteration of the above equation, which is defined as follows: stating at $x^0$
belonging to the domain of $g$, set
\begin{equation}\label{F-B}x^{k+1}={\bf prox}_{\alpha_k g}(x^k-\alpha_k u^k), \end{equation}
where $u^k\in \partial f(x^k)$ and the stepsize $\alpha_k$ is
positive for all $k\in\NN$. Iteration \eqref{F-B} recovers the classical subgradient iteration \cite{poljak}, when $g=0$,  and the proximal point iteration \cite{rock-1976}, when $f=0$. Moreover, it covers important situations in which $f$ is
nondifferentiable and it can  also be seen as a {\em forward-backward Euler discretization} of the subgradient flow differential inclusion 
$$
\dot{x}(t)\in-\partial [f(x(t))+g(x(t))]
$$ with variable $x:\RR_+\to\HH$; see \cite{neal-boyd}. Actually, if the derivative on the left side is replaced by the divided difference $(x^{k+1}-x^k)/\alpha_k$, then the discretization obtained is
$
(x^k-x^{k+1})/\alpha_k\in \partial f(x^k)+\partial g(x^{k+1}),
$ which is the proximal subgradient iteration \eqref{F-B}.

The nondifferentiability of the function $f$ has
a direct impact on the computational effort and the importance of
such problems, when $f$ is nonsmooth, is underlined because they occur frequently in
applications. Nondifferentiability arises, for instance,
in the problem of minimizing the total variation of a signal over a
convex set, in the problem of minimizing the sum of two set-distance
functions, in problems involving maxima of convex functions, the
Dantzing selector-type problems, the non-Gaussian image denoising
problem and in Tykhonov regularization problems with $L_1$ norms;
see, for instance, \cite{D-R-comb,james,tikhonov}. The iteration of the proximal subgradient splitting method, proposed in \eqref{F-B},
can be applied in these important instances, extending the classical 
subgradient iteration for more general problems as \eqref{l1-reg}.
In problem \eqref{prob}, $f$ is usually assumed to be
differentiable as in \cite{nesterov-2013}, which is not necessarily the case in this work. Moreover, the convergence of the iteration \eqref{F-B} to a
solution of \eqref{prob} has been established in the literature,
when the gradient of $f$ is globally Lipschitz continuous and
the stepsizes $\alpha_k$, $k\in\NN$ have to be chosen very small, \emph{i.e.}, for all $k$, $\alpha_k$ is less than
some constant related with the Lipschitz constant of the gradient of $f$; see, for instance,
\cite{comb-2005}. Recently, when $f$ is continuously differentiable but the Lipschitz constant is not available, the steplengths can be chosen using backtracking procedures; see
\cite{beck-teu,neal-boyd,nesterov-2013,yun-nghia-2014}. 

It
is important to mention that the forward-backward iteration finds
also applications in solving more general problems, like the
variational inequality and inclusion problems; see, for instance,
\cite{yunier-iusem,benar,bot-2014,comb-2004, rock-97} and the
references therein. On the other hand, the standard convergence
analysis of this iteration, for solving these general problems, requires at least a co-coercivity
assumption of the operator and the stepsizes to lie within a suitable interval; see, for
instance, Theorem $25.8$ of \cite{librobauch}. Note that co-coercive
operators are monotone and Lipschitz continuous, but the converse
does not hold in general; see \cite{Zhu-Marcotte-SIAM-1996}.
Although, for gradients of lower semicontinuous, proper and convex
functions, the co-coercivity is equivalent to the global Lipschitz
continuity assumption. This nice and surprising fact, which is strongly used 
in the convergence analysis of the proximal forward-backward method
for problem \eqref{prob}, when $f$ is differentiable, is known as
the {\em Baillon-Haddad Theorem}; see Corollary $18.16$ of
\cite{librobauch}.

The main aim of this work is release the differentiability of $f$
of the forward-backward splitting method, extending the classical
projected subgradient method and 
containing, as particular case, a new proximal subgradient iteration
for more general problems.  

This work is organized as follows. The next subsection provides our
notations and assumptions, and some preliminaries results that will
be used in the remainder of this paper. The proximal subgradient
splitting method and its weak convergence are analyzed by choosing
different stepsizes in Section \ref{Sec-new-4}. Finally, Section
\ref{s:6} gives some concluding remarks.

\subsection{Assumptions and Preliminaries}
In this section, we present our assumptions, classical
definitions and some results needed for the convergence analysis of the
proposed method.

We start by recalling some definitions and notation used in this paper,
which are standard and follows from \cite{librobauch,neal-boyd}. Throughout
this paper, we write $p:=q$ to indicate that $p$ is defined to be
equal to $q$. We
write $\NN$ for the nonnegative integers $\{0, 1, 2,\ldots\}$ and remind that
the extended-real number system is $\overline{\RR} := \RR \cup
\{+\infty\}$. The closed ball centered at $x\in\HH$ with radius
$\gamma>0$ will be denoted by $\mathbb{B}[x;\gamma]$, \emph{i.e.},
$\mathbb{B}[x;\gamma]:=\{y\in\HH : \|y-x\|\leq\gamma\}$. The domain
of any function $h:\HH\rightarrow\overline{\RR}$, denoted by ${\bf dom}(h)$, is
defined as ${\bf dom}(h):=\{x\in\HH : h(x)<+\infty\}$. The optimal value
of problem \eqref{prob} will be denoted by $s_*:=\dsty\inf\{(f+g)(x) :  x\in\HH\}$, noting that when $S_*\neq \emptyset$, $s_*=\dsty\min\{(f+g)(x) :  x\in\HH\}=(f+g)(x_*)$ for any $x_*\in S_*$. Finally, $\ell_1(\NN)$ denotes the set of summable
sequences in $[0, +\infty)$.

\noindent Throughout this paper we assume the following:

\noindent \; {\bf A1.} $\partial f$ is bounded on bounded sets on the domain of $g$, \emph{i.e.},
$\exists\,$$\zeta>0$ such that $\partial f(x)\subseteq \mathbb{B}[0;\zeta]$ for all $x\in V$, where $V$ is any bounded and closed subset of ${\bf dom}(g)$.

\noindent \; {\bf A2.} $\partial g$ has bounded elements on the domain of $g$, \emph{i.e.}, $\exists\,$$\rho\ge0$ such that
$\partial g(x)\cap\mathbb{B}[0;\rho]
\neq\emptyset$ for all $x\in {\bf dom}(g)$.

In connection with Assumption {\bf A1}, we recall that $\partial f$
is locally bounded on its open domain. In finite dimension spaces,
this result implies that {\bf A1} always holds when ${\bf dom}(f)$ is open. A widely used sufficient condition for  {\bf A1}  is the Lipschitz continuity of $f$ on ${\bf dom}(g)$. Furthermore, the
boundedness of the subgradients is crucial for the convergence
analysis of many classical subgradient methods in Hilbert spaces and
it has been widely considered in the literature; see, for instance,
\cite{AIS,sc-1,yunier-iusem, poljak}. 

Regarding Assumption {\bf
A2}, we emphasize that it holds trivially for important instances of
problem \eqref{prob}, \emph{e.g.}, problems \eqref{rest-opt} and
\eqref{l1-reg} because $\partial \delta_C(x)=N_C(x)$ and $\partial \|x\|_1=\{u\in\HH : \|u\|_\infty\le1\,,\; \la u,x\ra=\|x\|_1 \, \}$, respectively, or when ${\bf dom}(g)$ is a bounded set or also when $\HH$ is a
finite dimensional space. Note that Assumption {\bf A2} allows instances where $\partial g$ is an
unbounded set as is the particular case when $g$ is the
indicator function.
It is an existence condition, which is in general weaker than {\bf A1}.

Let us end the section by recalling the well-known concepts
so-called quasi-Fej\'er and Fej\'er convergence. 
\begin{definition}\label{def-fejer}
Let $S$ be a nonempty subset of $\HH$. A sequence $(x^k)_{k\in \NN}$
	in $\HH$ is said to be quasi-Fej\'er convergent to $S$ if and only
	if for all $x \in S$ there exists a sequence $(\epsilon_k)_{k\in
		\NN}$ in $\ell_1(\NN)$ and
	$\| x^{k+1}-x\|^2 \leq \| x^{k}-x\|^2 +\epsilon_k$ for all
	$k\in\NN$. When $(\epsilon_k)_{k\in \NN}$ is a null sequence, we say
	that $(x^k)_{k\in \NN}$ is Fej\'er convergent to $S$.
\end{definition}
The definition
originates in \cite{Ermolev} and has been elaborated further in
\cite{comb-2001}.
This definition, originated in
\cite{Ermolev}, has been elaborated further in \cite{comb-2001}.
In the following we present two well-known fact for quasi-Fej\'er
convergent sequences.
\begin{fact}\label{fato-qF} If the sequence $(x^k)_{k\in \NN}$ is quasi-Fej\'er convergent
to $S$, then:
\item[ {\bf(a)}] The sequence $(x^k)_{k\in \NN}$ is bounded.
\item[ {\bf(b)}] $(x^k)_{k\in \NN}$ is weakly convergent iff
all weak accumulation points of $(x^k)_{k\in \NN}$ belong
to $S$.
\end{fact}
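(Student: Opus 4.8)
The plan is to reduce both parts to one preliminary fact: for every $x\in S$ the real sequence $\bigl(\N{x^k-x}\bigr)_{k\in\NN}$ converges. To see this, fix $x\in S$, take the associated summable sequence $(\epsilon_k)_{k\in\NN}\in\ell_1(\NN)$, put $a_k:=\N{x^k-x}^2$ and $\sigma_k:=\sum_{j\ge k}\epsilon_j$ (finite, and $\sigma_k\to0$ as the tail of a convergent series). The quasi-Fej\'er inequality $a_{k+1}\le a_k+\epsilon_k$ gives $a_{k+1}+\sigma_{k+1}\le a_k+\sigma_k$, so $(a_k+\sigma_k)_{k\in\NN}$ is nonincreasing and bounded below by $0$, hence convergent; since $\sigma_k\to0$, the sequence $(a_k)_{k\in\NN}$ converges, and so does $\N{x^k-x}$. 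Part (a) is then immediate: iterating $a_{k+1}\le a_k+\epsilon_k$ yields $\N{x^k-x}^2\le\N{x^0-x}^2+\sum_{j\in\NN}\epsilon_j$ for all $k$, so $(x^k)_{k\in\NN}$ lies in the ball of radius $\bigl(\N{x^0-x}^2+\sum_{j\in\NN}\epsilon_j\bigr)^{1/2}$ about $x$.

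For the substantive implication in (b) --- if every weak accumulation point of $(x^k)_{k\in\NN}$ belongs to $S$, then $(x^k)_{k\in\NN}$ converges weakly --- I would argue as follows. By (a) the sequence is bounded, so by reflexivity of $\HH$ it has at least one weak accumulation point, which by hypothesis lies in $S$; it therefore suffices to prove that the weak accumulation point is unique. Let $\xb$ and $\yb$ be two weak accumulation points; both are in $S$, so by the preliminary fact $\lim_k\N{x^k-\xb}$ and $\lim_k\N{x^k-\yb}$ exist. From the identity
$$\N{x^k-\xb}^2-\N{x^k-\yb}^2=2\la x^k,\yb-\xb\ra+\N{\xb}^2-\N{\yb}^2,$$
the left-hand side converges, hence so does $\bigl(\la x^k,\yb-\xb\ra\bigr)_{k\in\NN}$; evaluating its limit along a subsequence with $x^{k_i}\weak\xb$ gives $\la\xb,\yb-\xb\ra$, and along a subsequence with $x^{k_j}\weak\yb$ gives $\la\yb,\yb-\xb\ra$, whence $\la\xb-\yb,\yb-\xb\ra=0$, i.e.\ $\xb=\yb$. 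A bounded sequence in a Hilbert space with a single weak accumulation point converges weakly to it, so $(x^k)_{k\in\NN}\weak\xb\in S$. The reverse implication is trivial under the intended reading that ``weakly convergent'' means weakly convergent to a point of $S$: the weak limit is then its unique weak accumulation point and lies in $S$. I would add a sentence pointing out that this reading is needed, since a quasi-Fej\'er sequence may perfectly well converge weakly to a point outside $S$ when its accumulation points are not all required to be in $S$ (for instance a constant sequence $x^k\equiv y$ with $y\notin S$ is trivially quasi-Fej\'er to $S$).

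The only step carrying real content is the uniqueness of the weak accumulation point --- the Opial/Combettes-type argument coupling the existence of $\lim_k\N{x^k-x}$ for $x\in S$ with the behaviour of inner products along weakly convergent subsequences; everything else (the telescoping in (a), the monotonicity argument for $\lim_k\N{x^k-x}$, the algebraic expansion) is routine. The main place to be careful is precisely the phrasing of the equivalence in (b) and the appeal to reflexivity to guarantee that a weak accumulation point exists at all.
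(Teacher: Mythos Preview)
Your argument is correct and fully self-contained, whereas the paper does not actually prove this fact: it simply cites Proposition~3.3(i) and Theorem~3.8 of Combettes~\cite{comb-2001} for parts~(a) and~(b) respectively. The route you take --- showing via the tail-sum trick that $\lim_k\|x^k-x\|$ exists for each $x\in S$, and then running the Opial-type polarization identity to force uniqueness of the weak cluster point --- is exactly the content of those cited results, so in substance you have reproduced what the paper defers to the literature. What your write-up buys is independence from the reference; what the paper's one-line citation buys is brevity.

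Your remark on the phrasing of~(b) is also correct and worth keeping: on the literal reading of ``weakly convergent'' (to some point of $\HH$, not necessarily in $S$), the ``only if'' direction is false, and your constant-sequence counterexample $x^k\equiv y\notin S$ shows it. The result actually proved in~\cite{comb-2001}, and the only direction the paper ever invokes (see Theorems~\ref{ptos-de-acum2}(c) and~\ref{ptos-de-acum3}(c)), is the implication you call substantive: if all weak accumulation points lie in $S$, then the sequence converges weakly to a point of $S$. So the imprecision in the stated equivalence is harmless for the paper's applications, but your caveat is the right thing to flag.
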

\begin{proof}
Item {\bf(a)} follows from Proposition $3.3$(i) of \cite{comb-2001}, and
Item {\bf(b)} follows from Theorem $3.8$ of \cite{comb-2001}.
\end{proof}


\section{The Proximal Subgradient Splitting Method}\label{Sec-new-4}

In this section we propose the proximal subgradient splitting method
extending the classical subgradient iteration. We prove that the
sequence of points generated by the proposed method converges weakly
to a solution of \eqref{prob} using different strategies for choosing of the
stepsizes. Moreover, we show the complexity analysis for the generated sequence.

\noindent The method is formally stated as follows:
\begin{center}\fbox{\begin{minipage}[b]{\textwidth}
{\bf P}roximal {\bf S}ubgradient {\bf S}plitting Method ({\bf PSS
Method})
\item [    ] {\bf Initialization Step.} Take $x^0\in {\bf dom}(g)$.

\item [    ] \noindent {\bf Iterative Step.} Set
\begin{equation*}\label{iteracion-sub}\tag{6}x^{k+1}={\bf prox}_{\alpha_k g}\left(x^k-\alpha_k u^k\right),\end{equation*}
where $u^k\in \partial f(x^k)$.

\item [    ] \noindent  {\bf Stop Criteria.} If $x^{k+1}=x^k$ then stop.
\end{minipage}}\end{center}
If {\bf PSS Method} stops at step $k$, then $x^k={\bf
prox}_{\alpha_k g}\left(x^k-\alpha_k u^k\right)$ with
$u^k\in\partial f(x^k)$, implying that $x^k$ is solution of problem
\eqref{prob}. Then, from now on, we assume that {\bf PSS
 Method} generates an infinite sequence $(x^k)_{k\in
\NN}$. Moreover, it follows directly from \eqref{F-B} that the
sequence $(x^k)_{k\in \NN}$ belongs to ${\bf dom}(g)$.

Before the formal analysis of the convergence properties of {\bf PSS Method}, we discuss below about the necessity of taking a (forward) subgradient step of $f$ instead of another (backward) proximal step.
\begin{remark} {\rm 
	To evaluate 
	the proximal operator of $f$ is necessary to solve the strongly convex
	minimization problem as \eqref{prox}. Thus, in the context of problem \eqref{prob},
	we assume that it is hard to evaluate the proximal operator of $f$, leaving out the possibility to use the standard and very powerful iteration so-called {\em Douglas-Rachford
		splitting method} presented in \cite{D-R-comb}. Such situations appear mainly when $f$ has a complicated algebraic expression and therefore it may impossibility to solve, explicitly or efficiently, subproblem \eqref{prox}. Indeed, very often in the applications,
	the formula for the proximity operator is not available in closed form and ad
	hoc algorithms have be used to compute ${\bf prox}_{\alpha f}$.
	This happens for instance
	when applying proximal methods to image deblurring with total variation \cite{beck-te},
	or to structured sparsity regularization problems in machine learning and inverse problems \cite{villa}. 
	
	A classical problem of the form of \eqref{prob}, when the subgradient of $f$ is  easily  available and ${\bf prox}_f$ does not has explicitly formula is the dual formulation of the following constrained convex problem:
	\begin{equation}\label{probRemark}
	\min h_0(y) \quad \mbox{subject that} \quad h_i(y)\le 0 \;\; (i=1, \ldots, n), 
	\end{equation} where $h_i:\RR^m\to \RR$ $(i=0, \ldots, n)$ are convex. 
	It can be writen as 
	$$
	\min_{x\in\RR^n} f(x) + \delta_{\RR^n_+}(x)
	$$
	with $f: \RR^n\to\RR$ defined as $f(x)=-\inf_{y\in\RR^n}\{h_0(y)+\sum_{i=1}^n x_ih_i(y)\}$. It is well-known that $$\partial f(x)={\rm conv}\left\{ h(y_x) : f(x)=h_0(y_x)+\sum_{i=1}^n x_ih_i(y_x)\right\},$$ and
	${\rm conv}\{S\}$ denotes the convex hull of a set $S$. However, compute ${\bf prox}_f$ does not look an easy problem. This argument is used widely in the literature to motivated the projected subgradient method, which can be easily modified for recovering  problems as \eqref{prob}, when $g$ is not necessary the indicator function. Indeed, consider problem \eqref{probRemark} when $n=m$ with an additional and simple restriction $g_0$, that is:
	\begin{equation}\label{probRemark*}
	\min h_0(y) \quad \mbox{subject that} \quad h_i(y)\le 0 \;\; (i=1, \ldots, n), \; g_0(y)\le 0, 
	\end{equation} which can be dualized as 
		$$
		\min_{x\in\RR^n} f(x) + \delta_{\RR^n_+}(x) + \lambda g_0(x), \qquad \lambda>0.
		$$ This problem is a particular case of \eqref{prob}, by taking $g=\delta_{\RR^n_+} + \lambda g_0$. Note that if ${\bf dom}( g_0)\subseteq \RR^n_+$ then $g=\lambda g_0$.
		
Thus, {\bf PSS Method} uses the proximal operator of $g$ and the explicit
	subgradient iteration of $f$ (\emph{i.e.}, the proximal operator of $f$ is never
	evaluated), which is, in general, much easier to implement than the proximal
	operator of $f+g$ or $f$, as happens in the standard proximal point iteration
	or the Douglas-Rachford algorithm, respectively for solving nonsmooth problems,
	as \eqref{prob}; see, for instance, \cite{D-R-comb}. Furthermore, note that in our case the subgradient iteration for the sum $f+g$ is not possible, because the domains of $f$ and $g$ are not the whole space.}\lqqd
\end{remark}
\noindent In the following we prove a crucial property of the
iterates generated by {\bf PSS
 Method}.
\begin{lemma} \label{lema-para-qF1}
Let $(x^k)_{k\in\NN}$ and $(u^k)_{k\in\NN}$ be the sequences generated by {\bf PSS
Method}. Then, for all $k\in \NN$ and $x\in {\bf dom}(g)$,
$$\|x^{k+1}-x\|^2\le\|x^{k}-x\|^2
+2\alpha_k\left[(f+g)(x)-(f+g)(x^k)\right]+\alpha_k^2\|u^k+w^k\|^2,$$
where $w^k\in \partial g(x^k)$ is arbitrary.
\end{lemma}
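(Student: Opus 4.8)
The plan is to begin from the proximal step and use the subdifferential characterization \eqref{in-sub} to make the increment $x^{k+1}-x^k$ explicit. Applying \eqref{in-sub} with $z=x^k-\alpha_k u^k$ and $\alpha=\alpha_k$ produces a subgradient
$$v^{k+1}:=\frac{(x^k-\alpha_k u^k)-x^{k+1}}{\alpha_k}\in\partial g(x^{k+1}),$$
so that $x^{k+1}-x^k=-\alpha_k\bigl(u^k+v^{k+1}\bigr)$. I would then feed this into the elementary three-point identity
$$\|x^{k+1}-x\|^2=\|x^{k}-x\|^2+2\la x^{k+1}-x^k,x^{k+1}-x\ra-\|x^{k+1}-x^k\|^2,$$
valid for every $x\in\HH$, which reduces the task to estimating $-2\alpha_k\la u^k+v^{k+1},x^{k+1}-x\ra$.

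Next I would split that inner product into a $v^{k+1}$-part and a $u^k$-part. For the $v^{k+1}$-part, the subgradient inequality for $g$ at $x^{k+1}$ gives $\la v^{k+1},x^{k+1}-x\ra\ge g(x^{k+1})-g(x)$, hence $-2\alpha_k\la v^{k+1},x^{k+1}-x\ra\le 2\alpha_k\bigl(g(x)-g(x^{k+1})\bigr)$. For the $u^k$-part I would write $\la u^k,x^{k+1}-x\ra=\la u^k,x^{k+1}-x^k\ra+\la u^k,x^k-x\ra$ and bound the second summand using $u^k\in\partial f(x^k)$, namely $\la u^k,x^k-x\ra\ge f(x^k)-f(x)$, leaving the cross term $\la u^k,x^{k+1}-x^k\ra$ for later.

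At this stage the estimate features $g(x^{k+1})$ instead of the $g(x^k)$ demanded by the statement, and this is the only genuine subtlety. I would eliminate it by invoking convexity of $g$ at $x^k$ with the arbitrary $w^k\in\partial g(x^k)$, i.e. $g(x^{k+1})\ge g(x^k)+\la w^k,x^{k+1}-x^k\ra$. After substitution, all the surviving first-order terms collect into $-2\alpha_k\la u^k+w^k,x^{k+1}-x^k\ra$, which together with the $-\|x^{k+1}-x^k\|^2$ coming from the three-point identity is handled by completing the square:
$$-\|x^{k+1}-x^k\|^2-2\alpha_k\la u^k+w^k,x^{k+1}-x^k\ra=-\bigl\|(x^{k+1}-x^k)+\alpha_k(u^k+w^k)\bigr\|^2+\alpha_k^2\|u^k+w^k\|^2\le\alpha_k^2\|u^k+w^k\|^2.$$
Assembling the pieces yields exactly $\|x^{k+1}-x\|^2\le\|x^{k}-x\|^2+2\alpha_k[(f+g)(x)-(f+g)(x^k)]+\alpha_k^2\|u^k+w^k\|^2$. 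The main obstacle is precisely reconciling the subgradient $v^{k+1}\in\partial g(x^{k+1})$ naturally delivered by the proximal step with the statement's $w^k\in\partial g(x^k)$; the convexity inequality for $g$ at $x^k$ performs this exchange at the price of the harmless quadratic term, and one should also note that $x^{k+1}\in{\bf dom}(g)$ so all subgradient inequalities used are legitimate.
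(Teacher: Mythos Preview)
Your proof is correct and follows essentially the same approach as the paper: both define the subgradient $v^{k+1}=\bar w^{k+1}\in\partial g(x^{k+1})$ from the proximal step, use a three-point identity, apply the subgradient inequalities for $f$ at $x^k$ and $g$ at $x^{k+1}$, swap $g(x^{k+1})$ for $g(x^k)$ via an arbitrary $w^k\in\partial g(x^k)$, and finish with a quadratic estimate. Your completion of the square discards precisely the term $\alpha_k^2\|w^k-v^{k+1}\|^2$ that the paper also drops, since $(x^{k+1}-x^k)+\alpha_k(u^k+w^k)=\alpha_k(w^k-v^{k+1})$.
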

\begin{proof}
Take any $x\in {\bf dom}(g)$. Note that \eqref{in-sub} and \eqref{F-B}
imply that $\dsty \bar{w}^{k+1}:=\frac{x^k-x^{k+1}}{\alpha_k}-u^k$, with
$u^k\in
\partial f(x^k)$ as defined by {\bf PSS
 Method}, belongs to $\partial
g(x^{k+1})$. Then,
\begin{align*}
\,&\alpha_k^2\|u^k+\bar{w}^{k+1}\|^2+\|x^{k}-x\|^2 -\|x^{k+1}-x\|^2
=\|x^{k+1}-x^k\|^2+\|x^{k}-x\|^2 -\|x^{k+1}-x\|^2\\\,&=2\la
x^{k}-x^{k+1},x^k-x\ra
=\, 2\alpha_k\la u^k,x^k-x\ra +2\la x^k-x^{k+1}-\alpha_ku^k,x^k-x\ra \\
\,&=\,2\alpha_k\la u^k,x^k-x\ra +2\alpha_k\left \la
\frac{x^k-x^{k+1}}{\alpha_k}-u^k,x^{k+1}-x\right\ra+2\alpha_k\left\la
\frac{x^k-x^{k+1}}{\alpha_k}-u^k,
x^k-x^{k+1}\right\ra\\
\,&=\,2\alpha_k\la u^k,x^k-x\ra +2\alpha_k\left \la
\bar{w}^{k+1},x^{k+1}-x\right\ra+2\alpha_k\la u^k, x^{k+1}-x^k\ra+
2\|x^k-x^{k+1}\|^2.
\end{align*}
Now using again that $\dsty
\frac{x^k-x^{k+1}}{\alpha_k}-u^k=\bar{w}^{k+1}\in
\partial g(x^{k+1})$ and the convexity of $g$ and $f$, we obtain
\begin{align*} \,&2\la x^{k}-x^{k+1},x^k-x\ra\ge 2\alpha_k\left[f(x^k)-f(x)+ g(x^{k+1})-g(x)+\la u^k, x^{k+1}-x^k\ra\right]
+2
\|x^k-x^{k+1}\|^2\\
\,&= 2\alpha_k\left[(f+g)(x^k)-(f+g)(x)+g(x^{k+1})-g(x^{k})+\la u^k,
x^{k+1}-x^k\ra\right]+2\alpha_k^2\|u^k+\bar{w}^{k+1}\|^2\\\,&\ge
2\alpha_k\left[(f+g)(x^k)-(f+g)(x)+\la w^k+u^k,
x^{k+1}-x^k\ra\right]+2\alpha_k^2\|u^k+\bar{w}^{k+1}\|^2,
\end{align*} for any $w^k\in \partial g(x^k)$.
We thus have shown that
\begin{align*}\|x^{k+1}-x\|^2\le \,&\|x^{k}-x\|^2
+2\alpha_k\left[(f+g)(x)-(f+g)(x^k)\right]\\\,&+ 2\alpha_k^2 \la
u^k+w^k,
u^k+\bar{w}^{k+1}\ra-\alpha_k^2\|u^k+\bar{w}^{k+1}\|^2\\=\,&\|x^{k}-x\|^2
+2\alpha_k\left[(f+g)(x)-(f+g)(x^k)\right]+
\alpha_k^2\|u^k+w^{k}\|^2-\alpha_k^2\|w^k-\bar{w}^{k+1}\|^2.
\end{align*}
Note that $w^k\in \partial g(x^k)$ is arbitrary and the result follows.
\end{proof}
Since subgradient methods are not descent methods, as the
proposed method here, it is common to keep track of the best point
found so far, \emph{i.e.}, the one with minimum function value among the iterates. At each
step, we set it recursively as $(f+g)^0_{\rm best}:=(f+g)(x^0)$ and
\begin{equation}\label{mejor-valor}(f+g)^k_{\rm best}:=
\min\left\{(f+g)^{k-1}_{\rm best}, (f+g)(x^k)\right\},\end{equation}
for all $k$. Since $\left((f+g)^k_{\rm best}\right)_{k\in \NN}$  is
a decreasing sequence, it has a limit (which can be $-\infty$). When
the function $f$ is differentiable and its gradient Lipschitz
continuous, it is possible to prove the complexity of the iterates
generated by {\bf PSS Method}; see \cite{nesterov-2013}. In our
instance ($f$ is not necessarily differentiable) we expect, of
course, slower convergence.

Next we present a convergence rate result for the sequence of the best
functional values $\left((f+g)^k_{\rm best}\right)_{k\in \NN}$ to
$\min\{(f+g)(x) :  x\in\HH\}$.
\begin{lemma}\label{f-best-alphak} Let $\left((f+g)^k_{\rm best}\right)_{k\in \NN}$ be the sequence defined by \eqref{mejor-valor}.
If $S_*\neq\emptyset$ then, for all $k\in \NN$,
\begin{equation*}
(f+g)^k_{\rm best}-\min_{x\in\HH}(f+g)(x)\le \frac{[{\rm
dist}(x^0,S_*)]^2+C_k\sum_{i=0}^k \alpha_i^2}{ 2\sum_{i=0}^k
\alpha_i},
\end{equation*} where $C_k:=\max\left\{\|u^i+w^i\|^2 : 0\le i\le
k \right\}$ with $w^i\in \partial g(x^i)$
$(i=0,\ldots,k)$ are arbitrary.
\end{lemma}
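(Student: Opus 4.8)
The plan is to sum the inequality from Lemma~\ref{lema-para-qF1} over $i=0,\ldots,k$, taking $x=x_*\in S_*$ to be the projection of $x^0$ onto $S_*$, and then telescope the distance terms. Concretely, applying Lemma~\ref{lema-para-qF1} with $x=x_*$ gives, for each $i$,
\[
\|x^{i+1}-x_*\|^2\le\|x^{i}-x_*\|^2+2\alpha_i\left[s_*-(f+g)(x^i)\right]+\alpha_i^2\|u^i+w^i\|^2,
\]
where $s_*=\min_{x\in\HH}(f+g)(x)$ and $w^i\in\partial g(x^i)$ is arbitrary. Rearranging and summing from $i=0$ to $k$, the left-hand-side/first-right-hand-side terms telescope, leaving
\[
2\sum_{i=0}^k\alpha_i\left[(f+g)(x^i)-s_*\right]\le\|x^0-x_*\|^2-\|x^{k+1}-x_*\|^2+\sum_{i=0}^k\alpha_i^2\|u^i+w^i\|^2\le[{\rm dist}(x^0,S_*)]^2+\sum_{i=0}^k\alpha_i^2\|u^i+w^i\|^2,
\]
using $\|x^0-x_*\|={\rm dist}(x^0,S_*)$ and discarding the nonnegative term $\|x^{k+1}-x_*\|^2$.

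Next I would bound the left-hand side from below. Since $(f+g)(x^i)-s_*\ge(f+g)^k_{\rm best}-s_*$ fails in general (the best value is a minimum over $i\le k$, so $(f+g)^k_{\rm best}\le(f+g)(x^i)$ for every $i\le k$, hence $(f+g)(x^i)-s_*\ge(f+g)^k_{\rm best}-s_*$ does hold), we get
\[
2\left[(f+g)^k_{\rm best}-s_*\right]\sum_{i=0}^k\alpha_i\le2\sum_{i=0}^k\alpha_i\left[(f+g)(x^i)-s_*\right].
\]
Combining with the upper bound above and replacing each $\|u^i+w^i\|^2$ by $C_k:=\max\{\|u^i+w^i\|^2:0\le i\le k\}$ so that $\sum_{i=0}^k\alpha_i^2\|u^i+w^i\|^2\le C_k\sum_{i=0}^k\alpha_i^2$, we obtain
\[
2\left[(f+g)^k_{\rm best}-s_*\right]\sum_{i=0}^k\alpha_i\le[{\rm dist}(x^0,S_*)]^2+C_k\sum_{i=0}^k\alpha_i^2.
\]
Dividing by $2\sum_{i=0}^k\alpha_i>0$ gives the claimed bound. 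Note that $(f+g)^k_{\rm best}-s_*\ge0$, so the right-hand side is automatically nonnegative, which is consistent.

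The only mild subtlety is notational: $w^i$ in the statement is declared to be an arbitrary element of $\partial g(x^i)$, so for each fixed choice of these subgradients the summed inequality holds with exactly those $w^i$; the bound is therefore valid for any such selection, and in particular Assumption~\textbf{A2} guarantees one may pick $w^i$ with $\|w^i\|\le\rho$, keeping $C_k$ finite (together with Assumption~\textbf{A1}, which bounds $\|u^i\|$ on the bounded set containing the iterates—itself bounded via quasi-Fej\'er arguments, though that is not needed here). I do not anticipate a real obstacle; the proof is a direct telescoping-and-summation argument, and the main point to be careful about is the direction of the inequality $(f+g)^k_{\rm best}\le(f+g)(x^i)$ for $i\le k$ when extracting the factor $\sum_{i=0}^k\alpha_i$.
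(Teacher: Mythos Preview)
Your proof is correct and follows essentially the same route as the paper: project $x^0$ onto $S_*$, apply Lemma~\ref{lema-para-qF1} at $x=x_*$ for $i=0,\ldots,k$, telescope, drop $\|x^{k+1}-x_*\|^2$, replace each $\|u^i+w^i\|^2$ by $C_k$, and use $(f+g)^k_{\rm best}\le (f+g)(x^i)$ to pull the factor $\sum_{i=0}^k\alpha_i$ out. The only cosmetic issue is the self-correcting parenthetical (``fails in general \ldots does hold''), which you should clean up, and the closing paragraph on Assumptions {\bf A1}--{\bf A2} is extraneous to the proof itself.
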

\begin{proof}
Define $x_*:={\bf P}_{S_*}(x^0)$. Note that $x_*$ exists because $S_*$ is
a nonempty closed and convex set of $\HH$. By applying  Lemma
\ref{lema-para-qF1}, $k+1$ times, for $i\in\{0,1,\ldots,k\}$ at $x_*\in S_*$, we get
\begin{align}\label{para-lema-ergodic}\nonumber
\|x^{k+1}-x_*\|^2\le\,&\|x^{k}-x_*\|^2
+2\alpha_k\left[(f+g)(x_*)-(f+g)(x^k)\right]+\alpha_k^2\|u^k+w^k\|^2\\\le\,&\|x^{0}-x_*\|^2
+2\sum_{i=0}^k
\alpha_i\left[(f+g)(x_*)-(f+g)(x^i)\right]+\sum_{i=0}^k\alpha_i^2\|u^i+w^i\|^2\\\le\,&[{\rm
dist}(x^0,S_*)]^2 +2\left[\min_{x\in\HH}(f+g)(x)-(f+g)^k_{\rm best}\right]\sum_{i=0}^k
\alpha_i+C_k\sum_{i=0}^k\alpha_i^2,\nonumber
\end{align} where $\left(f+g\right)^k_{\rm best}$ is defined by \eqref{mejor-valor} and the result follows after simple algebra.
\end{proof}
Next we establish the rate of convergence of the ergodic
sequence $(\bar{x}^k)_{k\in \NN}$ of
$(x^k)_{k\in \NN}$, which is defined recursively as
$\bar{x}^0=x^0$ and given $\sigma_0=\alpha_0$ and
$\sigma_k=\sigma_{k-1}+\alpha_k$, we define
$$
\bar{x}^k=\left(1-\frac{\alpha_k}{\sigma_k}\right)\bar{x}^{k-1}+\frac{\alpha_k}{\sigma_k}x^{k}.$$
After easy induction, we have $\sigma_k=\sum_{i=0}^k\alpha_i$ and
\begin{equation}\label{egodic-seq} \bar{x}^k=\frac{1}{\sigma_k}\sum_{i=0}^k
\alpha_i\, x^i,
\end{equation} for all $k\in \NN$.

\noindent The following result is very similar to Lemma
\ref{f-best-alphak}, considering the ergodic sequence defined by \eqref{egodic-seq}.
\begin{lemma}\label{f-best-alphak2}
Let $(\bar{x}^k)_{k\in \NN}$ be the ergodic sequence
defined by \eqref{egodic-seq}. If $S_*\neq \emptyset$, then
\begin{equation*}
(f+g)(\bar{x}^k)-\min_{x\in\HH}(f+g)(x)\le \frac{[{\rm
dist}(x^0,S_*)]^2+C_k\sum_{i=0}^k \alpha_i^2}{ 2\sum_{i=0}^k
\alpha_i},
\end{equation*} where $C_k=\max\left\{\|u^i+w^i\|^2 : 0\le i\le
k \right\}$ with $w^i\in \partial g(x^i)$
$(i=0,\ldots,k)$ are arbitrary.
\end{lemma}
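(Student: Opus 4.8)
The plan is to recycle the telescoped estimate already derived inside the proof of Lemma \ref{f-best-alphak} and then pass to the ergodic average by convexity. Set $x_*:={\bf P}_{S_*}(x^0)$, which exists since $S_*$ is nonempty, closed and convex. Applying Lemma \ref{lema-para-qF1} at $x_*$ for $i=0,1,\ldots,k$ and summing, exactly as in the chain \eqref{para-lema-ergodic}, gives
$$\|x^{k+1}-x_*\|^2\le[{\rm dist}(x^0,S_*)]^2+2\sum_{i=0}^k\alpha_i\left[(f+g)(x_*)-(f+g)(x^i)\right]+C_k\sum_{i=0}^k\alpha_i^2 .$$
Discarding the nonnegative left-hand side and rearranging yields
$$2\sum_{i=0}^k\alpha_i\left[(f+g)(x^i)-(f+g)(x_*)\right]\le[{\rm dist}(x^0,S_*)]^2+C_k\sum_{i=0}^k\alpha_i^2 .$$

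Next I would invoke convexity. Since ${\bf dom}(g)$ is convex and $x^i\in{\bf dom}(g)$ for every $i$, the ergodic iterate $\bar{x}^k=\frac{1}{\sigma_k}\sum_{i=0}^k\alpha_i x^i$, with $\sigma_k=\sum_{i=0}^k\alpha_i$ as in \eqref{egodic-seq}, is again in ${\bf dom}(g)$ and is a genuine convex combination of the $x^i$, because the weights $\alpha_i/\sigma_k$ are nonnegative and sum to one. Jensen's inequality for the convex function $f+g$ then gives
$$(f+g)(\bar{x}^k)\le\frac{1}{\sigma_k}\sum_{i=0}^k\alpha_i\,(f+g)(x^i).$$
Combining this with the displayed bound above, and using $(f+g)(x_*)=\min_{x\in\HH}(f+g)(x)$ since $x_*\in S_*$, we get $2\sigma_k\left[(f+g)(\bar{x}^k)-\min_{x\in\HH}(f+g)(x)\right]\le[{\rm dist}(x^0,S_*)]^2+C_k\sum_{i=0}^k\alpha_i^2$. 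Dividing by $2\sigma_k=2\sum_{i=0}^k\alpha_i>0$ delivers the stated estimate.

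There is essentially no hard step here; the only points to keep straight are that the telescoped inequality from the proof of Lemma \ref{f-best-alphak} is available verbatim (at $x_*={\bf P}_{S_*}(x^0)$), that $\bar{x}^k$ is a convex combination of the iterates so that Jensen applies and the left-hand side is finite, and that the constant $C_k$ is the same one used in Lemma \ref{f-best-alphak}, so no new bound on $\|u^i+w^i\|$ is required. Thus the proof of this lemma is a direct adaptation of the previous one, replacing the "best value" lower bound $(f+g)^k_{\rm best}\le(f+g)(x^i)$ by the Jensen bound on $(f+g)(\bar x^k)$.
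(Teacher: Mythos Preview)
Your proof is correct and follows essentially the same approach as the paper: both reuse the telescoped estimate \eqref{para-lema-ergodic} from the proof of Lemma~\ref{f-best-alphak} at $x_*={\bf P}_{S_*}(x^0)$, then invoke convexity of $f+g$ (Jensen) on the weighted average $\bar{x}^k$ and divide by $2\sigma_k$. The only cosmetic difference is that the paper divides by $\sigma_k$ first and then applies convexity, whereas you apply Jensen first and divide afterward.
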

\begin{proof}
Proceeding as in the proof of Lemma \ref{f-best-alphak} until Equation
\eqref{para-lema-ergodic} and after dividing by
$\sum_{i=0}^k \alpha_i$, we get
\begin{align}\label{para-lema-ergodic2}\nonumber
\sum_{i=0}^k
\frac{\alpha_i}{\sigma_k}\left[(f+g)(x^i)-\min_{x\in\HH}(f+g)(x)\right]\le\,&\frac{1}{2\sigma_k}\left([{\rm
dist}(x^0,S_*)]^2-\|x^{k+1}-x_*\|^2\right)
+\frac{C_k}{2\sigma_k}\sum_{i=0}^k\alpha_i^2\\\le\,&\frac{1}{2\sigma_k}\left([{\rm
dist}(x^0,S_*)]^2 +C_k\sum_{i=0}^k\alpha_i^2\right),
\end{align} where $\sigma_k:=\sum_{i=0}^k \alpha_i$. Using the convexity of $f+g$ after note that $\frac{\alpha_i}{\sigma_k}\in [0,1]$ for all $i\in\{0,1,\ldots,k\}$ and $\sum_{i=0}^k\frac{\alpha_i}{\sigma_k}=1$ and \eqref{egodic-seq} in the above inequality \eqref{para-lema-ergodic2},
the result follows.
\end{proof}
Next we focus on constant step sizes, which is motivated
by the fact that we are interested in quantifying the progress of
the proposed method to find an
approximate solution.
\begin{corollary}\label{pasos-const} Let $(x^k)_{k\in \NN}$ be the sequence generated by {\bf PSS
 Method} with the stepsizes $\alpha_k$ constant equal to $\alpha$, $\left((f+g)^k_{\rm best}\right)_{k\in \NN}$ be the sequence defined by \eqref{mejor-valor}
 and $(\bar{x}^k)_{k\in\NN}$
be the ergodic sequence as \eqref{egodic-seq}. Then, the iteration
attains the optimal rate at $\alpha=\frac{{\rm
dist}(x^0,S_*)}{\sqrt{C_k}}\cdot\frac{1}{\sqrt{k+1}}$, \emph{i.e.}, for all
$k\in \NN$,
\begin{equation*}
(f+g)^k_{\rm best}-\min_{x\in\HH}(f+g)(x)\le \frac{[{\rm dist}(x^0,S_*)]^2+\alpha^2 (k+1)
C_k}{ 2(k+1) \alpha}\le \frac{{\rm dist}(x^0,S_*)\cdot\sqrt{C_k}}{\sqrt{k+1}}
\end{equation*}
and
\begin{equation*}
(f+g)(\bar{x}^k)-\min_{x\in\HH}(f+g)(x)\le \frac{[{\rm dist}(x^0,S_*)]^2+\alpha^2 (k+1)
C_k}{ 2(k+1) \alpha}\le \frac{{\rm dist}(x^0,S_*)\cdot\sqrt{C_k}}{\sqrt{k+1}},
\end{equation*} where $C_k=\max\left\{\|u^i+w^i\|^2 : 0\le i\le
k \right\}$ with $w^i\in \partial g(x^i)$ $(i=0,\ldots,k)$ are
arbitrary.
\end{corollary}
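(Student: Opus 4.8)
The plan is to specialize Lemmas~\ref{f-best-alphak} and~\ref{f-best-alphak2} to the case of a constant stepsize and then minimize the resulting upper bound over the value of that stepsize. Write $d:={\rm dist}(x^0,S_*)$. First I would note that, when $\alpha_i\equiv\alpha$, one has $\sum_{i=0}^k\alpha_i=(k+1)\alpha$ and $\sum_{i=0}^k\alpha_i^2=(k+1)\alpha^2$, so substituting these into Lemma~\ref{f-best-alphak} (respectively Lemma~\ref{f-best-alphak2}) immediately gives
$$
(f+g)^k_{\rm best}-\min_{x\in\HH}(f+g)(x)\le\frac{d^2+\alpha^2(k+1)C_k}{2(k+1)\alpha},
$$
together with the same bound with $(f+g)^k_{\rm best}$ replaced by $(f+g)(\bar{x}^k)$. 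This already yields the left-hand inequality in each of the two displayed chains, valid for every constant stepsize $\alpha>0$.

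Next I would analyze the function $\phi(\alpha):=\dfrac{d^2+(k+1)C_k\alpha^2}{2(k+1)\alpha}=\dfrac{d^2}{2(k+1)\alpha}+\dfrac{C_k\alpha}{2}$ on $(0,+\infty)$. The product of its two summands equals $\dfrac{d^2C_k}{4(k+1)}$, independently of $\alpha$, so the arithmetic--geometric mean inequality gives
$$
\phi(\alpha)\ge 2\sqrt{\frac{d^2}{2(k+1)\alpha}\cdot\frac{C_k\alpha}{2}}=\frac{d\sqrt{C_k}}{\sqrt{k+1}},
$$
with equality precisely when the two summands coincide, i.e.\ when $\alpha=\dfrac{d}{\sqrt{C_k}}\cdot\dfrac{1}{\sqrt{k+1}}$. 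For this value of $\alpha$ the bound of the previous paragraph equals $\dfrac{d\sqrt{C_k}}{\sqrt{k+1}}$, which is exactly the right-hand side of both displayed inequalities; this simultaneously proves the corollary and shows that the stated stepsize minimizes the bound.

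Finally I would dispose of the degenerate cases. If $d=0$ then $x^0\in S_*$, so $(f+g)^0_{\rm best}$ and $(f+g)(\bar{x}^0)$ already equal $\min_{x\in\HH}(f+g)(x)$; and if $C_k=0$ then $u^i=-w^i$ for some $0\le i\le k$, with $u^i\in\partial f(x^i)$ and $w^i\in\partial g(x^i)$, whence $0\in\partial(f+g)(x^i)$ and $(f+g)^k_{\rm best}=\min_{x\in\HH}(f+g)(x)$; in both situations the inequalities hold trivially. Otherwise $d>0$ and $C_k>0$, the optimal $\alpha$ is well defined, and every division above is legitimate. I do not expect a real obstacle here; the one point deserving a moment's attention is recognizing that the per-$\alpha$ bound inherited from the two lemmas is, as a function of $\alpha$, minimized exactly at the announced value, after which the corollary is pure substitution.
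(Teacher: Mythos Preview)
Your argument is correct and follows exactly the route the paper takes: specialize Lemmas~\ref{f-best-alphak} and~\ref{f-best-alphak2} to constant stepsizes and then minimize the resulting bound in $\alpha$. The paper's proof is a single sentence saying just that, whereas you spell out the minimization via the AM--GM inequality and also treat the degenerate cases $d=0$ and $C_k=0$; these additions are fine (note that $C_k=0$ in fact forces $u^i+w^i=0$ for \emph{every} $i\le k$, not merely some $i$, but your conclusion only needs one such index).
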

\begin{proof}
If we consider constant stepsizes, \emph{i.e.}, $\alpha_k=\alpha$ for all
$k\in\NN$, then the optimal rate is obtained when
$\alpha=\frac{{\rm
dist}(x^0,S_*)}{\sqrt{C_k}}\cdot\frac{1}{\sqrt{k+1}}$ from
minimizing the right part of Lemmas \ref{f-best-alphak} and
\ref{f-best-alphak2}. 
\end{proof}
Note that under Assumption {\bf A2}, $C_k\le (\max_{1\le i\le k}\|u^i\|+\rho)^2$. Hence when ${\bf dom}(g)$ is bounded, Assumption {\bf A1} implies that $C_k\le(\zeta+\rho)^2$ for all $k\in\NN$. In this case our analysis showed that the expected error of the iterates
generated by {\bf PSS
 Method} with constant stepsizes after $k$
iterations is $\mathcal{O}\left((k+1)^{-1/2}\right)$. Hence, we can
search an $\varepsilon$-solution of problem \eqref{prob} with
$\mathcal{O}\left(\varepsilon^{-2}\right)$ iterations. Of course,
this is worse than the rate $\mathcal{O}(k^{-1})$ and
$\mathcal{O}\left(\varepsilon^{-1}\right)$ iterations of the
proximal forward-backward iteration for the differentiable and
convex $f$ with Lipschitz continuous gradient; see, for instance,
\cite{nesterov-2013}. However, as was showed in Section $3.2.1$, Theorem 3.2.1 of \cite{Book-Nest}, the worst expected error after $k$
iterations of the classical subgradient iteration is attainable equal to $\mathcal{O}\left((k+1)^{-1/2}\right)$ for general nonsmooth problems.  
\subsection{Exogenous stepsizes}
In this subsection we analyze the convergence of {\bf PSS
 Method}
using exogenous stepsizes, \emph{i.e.}, the positive exogenous sequence of
stepsizes $(\alpha_k)_{k\in \NN}$ satisfies that
$\dsty\alpha_k=\frac{\beta_k}{\eta_k}$ where
$\eta_k:=\max\{1,\|u^k\|\}$ for all $k$, and
\begin{equation}\label{pasos-exogenous}
\sum_{k=0}^{\infty} \beta_k^2<+\infty \qquad \mbox{and} \qquad
\sum_{k=0}^{\infty} \beta_k=+\infty.
\end{equation}
We begin with a useful consequence of Lemma \ref{lema-para-qF1}.
\begin{corollary}\label{lema-para-qF}
Let $x\in {\bf dom}(g)$. Then, for all $k\in \NN$,
$$\|x^{k+1}-x\|^2\le\|x^{k}-x\|^2
+2\frac{\beta_k}{\eta_k}\left[(f+g)(x)-(f+g)(x^k)\right]+\left(1+2\rho+\rho^2\right)\beta_k^2,$$
where $\rho\ge0$ is as defined in Assumption {\bf A2}.
\end{corollary}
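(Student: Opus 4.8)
The plan is to derive Corollary~\ref{lema-para-qF} directly from Lemma~\ref{lema-para-qF1} by choosing the free subgradient $w^k\in\partial g(x^k)$ well and then estimating the last term via the definition of the exogenous stepsizes. Since the generated sequence $(x^k)_{k\in\NN}$ lies in ${\bf dom}(g)$, Assumption {\bf A2} lets us pick, for each $k$, an element $w^k\in\partial g(x^k)$ with $\|w^k\|\le\rho$. For this choice the triangle inequality gives
$$\|u^k+w^k\|^2\le\big(\|u^k\|+\rho\big)^2=\|u^k\|^2+2\rho\|u^k\|+\rho^2 .$$

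Next I would substitute $\alpha_k=\beta_k/\eta_k$ with $\eta_k=\max\{1,\|u^k\|\}$ and bound the three resulting pieces. Since $\eta_k\ge 1$ and $\eta_k\ge\|u^k\|$, one has $\|u^k\|^2/\eta_k^2\le 1$, $\|u^k\|/\eta_k^2\le 1$ (because $\eta_k^2\ge\eta_k\ge\|u^k\|$) and $1/\eta_k^2\le 1$; hence
$$\alpha_k^2\|u^k+w^k\|^2=\frac{\beta_k^2}{\eta_k^2}\big(\|u^k\|^2+2\rho\|u^k\|+\rho^2\big)\le\big(1+2\rho+\rho^2\big)\beta_k^2 .$$
Plugging this bound and the identity $\alpha_k=\beta_k/\eta_k$ into the inequality of Lemma~\ref{lema-para-qF1} evaluated at the given $x\in{\bf dom}(g)$ yields precisely the claimed estimate.

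There is essentially no obstacle here: the proof is a short computation. The only point deserving attention is that one must invoke Assumption {\bf A2} --- rather than {\bf A1} --- to produce a uniformly bounded selection $w^k$ of $\partial g(x^k)$ without requiring $\partial g$ to be bounded (recall that {\bf A2} remains compatible with $g$ being an indicator function), and to notice that the normalization by $\eta_k$ simultaneously controls all three summands coming from the expansion of $\|u^k+w^k\|^2$, turning the term $\alpha_k^2\|u^k+w^k\|^2$ into the summable-in-$k$ quantity $(1+2\rho+\rho^2)\beta_k^2$, which is exactly what the subsequent quasi-Fej\'er and convergence arguments will require.
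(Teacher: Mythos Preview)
Your proof is correct and follows essentially the same route as the paper: invoke Lemma~\ref{lema-para-qF1}, select $w^k\in\partial g(x^k)$ with $\|w^k\|\le\rho$ via Assumption~{\bf A2}, and then use $\eta_k\ge\|u^k\|$ and $\eta_k\ge1$ to bound $\alpha_k^2\|u^k+w^k\|^2$ by $(1+2\rho+\rho^2)\beta_k^2$. The paper's argument is identical up to the order in which the bounds $\|w^k\|\le\rho$ and the division by $\eta_k^2$ are applied.
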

\begin{proof}
The result follows by noting that $\eta_k\ge\|u^k\|$, $\eta_k\ge1$
for all $k\in \NN$ and letting $w^k\in \partial g(x^k)$ such that $\|w^k\|\le\rho$ for all $k\in\NN$ in view of Assumption {\bf A2}. Then,
$$
\frac{\|u^k+w^k\|^2}{\eta_k^2}\le\frac{\|u^k\|^2}{\eta_k^2}+2\frac{\|u^k\|\|w^k\|}{\eta_k^2}+\frac{\|w^k\|^2}{\eta_k^2}\le 1+2\rho+\rho^2.
$$ Now, Lemma~\ref{lema-para-qF1} implies the desired result.
\end{proof}
\noindent Now we define the auxiliary set
\begin{equation}\label{Slev} S_{\rm lev}(x^0):=\left\{x\in {\bf dom}(g) :
(f+g)(x)\le (f+g)(x^k),\; \forall k\in
\NN\right\}.\end{equation} When the solution set of problem
\eqref{prob} is nonempty, $S_{\rm lev}(x^0)\neq \emptyset$ because
$S_*\subseteq S_{\rm lev}(x^0)$. Next, we prove the two main results of this
subsection.

\begin{theorem}\label{ptos-de-acum2} Let $(x^k)_{k\in \NN}$ be the
sequence generated by {\bf PSS
Method} with exogenous stepsizes. If there exists $\bar{x}\in S_{\rm lev}(x^0)$, then:
\item [ {\bf(a)}] The sequence $(x^k)_{k\in \NN}$ is quasi-Fej\'er convergent to $$\mathcal{L}_{f+g}(\bar{x}):=\left\{x\in {\bf dom}(g) : (f+g)(x)\le
(f+g)(\bar{x})\right\}.$$
\item [ {\bf(b)}]$\lim_{k\rightarrow\infty}\,(f+g)(x^k)=(f+g)(\bar{x}).$
\item [ {\bf(c)}] The sequence $(x^k)_{k\in \NN}$ is weakly convergent to some $\tilde{x}\in
\mathcal{L}_{f+g}(\bar{x})$.
\end{theorem}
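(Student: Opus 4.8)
The plan is to run the three items in order, each resting on Corollary~\ref{lema-para-qF} and Fact~\ref{fato-qF}.

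\emph{Item (a).} Fix an arbitrary $x\in\mathcal{L}_{f+g}(\bar x)$; then $x\in{\bf dom}(g)$, and since $\bar x\in S_{\rm lev}(x^0)$ we have $(f+g)(x)\le(f+g)(\bar x)\le(f+g)(x^k)$ for every $k$, so the bracketed term in Corollary~\ref{lema-para-qF} is nonpositive. Discarding it gives $\|x^{k+1}-x\|^2\le\|x^k-x\|^2+(1+2\rho+\rho^2)\beta_k^2$, and $\sum_k(1+2\rho+\rho^2)\beta_k^2<+\infty$ by \eqref{pasos-exogenous}. As $\bar x\in\mathcal{L}_{f+g}(\bar x)$, the target set is nonempty, so this is precisely quasi-Fej\'er convergence of $(x^k)_{k\in\NN}$ to $\mathcal{L}_{f+g}(\bar x)$, which is (a).

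\emph{Item (b).} By (a) and Fact~\ref{fato-qF}(a) the sequence $(x^k)_{k\in\NN}$ is bounded, so by {\bf A1} the subgradients $u^k$ are bounded; hence $1\le\eta_k\le\max\{1,\zeta\}$ and $\sum_k\alpha_k=\sum_k\beta_k/\eta_k=+\infty$ by \eqref{pasos-exogenous}. Writing $a_k:=(f+g)(x^k)-(f+g)(\bar x)\ge 0$, Corollary~\ref{lema-para-qF} at $x=\bar x$ telescopes to $\sum_k\alpha_k a_k<+\infty$, which with $\sum_k\alpha_k=+\infty$ forces $\liminf_k a_k=0$. Upgrading this to $\lim_k a_k=0$ is the substantive part: I would first record that $\|x^{k+1}-x^k\|\to 0$, using the sharper inequality inside the proof of Lemma~\ref{lema-para-qF1} (keep the term $-\alpha_k^2\|w^k-\bar w^{k+1}\|^2$ with a selection $w^k\in\partial g(x^k)$, $\|w^k\|\le\rho$, allowed by {\bf A2}) to get $\sum_k\alpha_k^2\|w^k-\bar w^{k+1}\|^2<+\infty$ and then $\|x^{k+1}-x^k\|=\alpha_k\|u^k+\bar w^{k+1}\|\le(1+\rho)\beta_k+\alpha_k\|w^k-\bar w^{k+1}\|\to 0$. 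Next, {\bf A1} makes $f$ Lipschitz on bounded subsets of ${\bf dom}(g)$ and the two-sided subgradient inequality turns {\bf A2} into global $\rho$-Lipschitzness of $g$ on ${\bf dom}(g)$, so $f+g$ is Lipschitz on the (bounded) set of iterates and $|a_{k+1}-a_k|\to 0$. I would then close (b) by ruling out a sustained oscillation of $(a_k)$ above level $0$: on any run where $a_k\ge\varepsilon$ the quantity $\|x^k-\bar x\|^2$ decreases by at least $2\alpha_k\varepsilon$ up to the summable errors $(1+2\rho+\rho^2)\beta_k^2$, and since $\|x^k-\bar x\|^2$ converges (by (a)) while $|a_{k+1}-a_k|\to0$ and $\liminf_k a_k=0$, such runs cannot recur; hence $\lim_k(f+g)(x^k)=(f+g)(\bar x)$.

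\emph{Item (c).} The function $f+g$ is proper, convex and lower semicontinuous (its domain is ${\bf dom}(g)\neq\emptyset$ because ${\bf dom}(g)\subseteq{\bf dom}(f)$), hence weakly lower semicontinuous. If $x^{k_j}\rightharpoonup\hat x$, then by weak lower semicontinuity and (b) we get $(f+g)(\hat x)\le\liminf_j(f+g)(x^{k_j})=(f+g)(\bar x)<+\infty$, so $\hat x\in{\bf dom}(g)$ and $\hat x\in\mathcal{L}_{f+g}(\bar x)$. Thus every weak accumulation point of $(x^k)_{k\in\NN}$ lies in $\mathcal{L}_{f+g}(\bar x)$; since by (a) the sequence is quasi-Fej\'er convergent to that set, Fact~\ref{fato-qF}(b) yields weak convergence of $(x^k)_{k\in\NN}$ to some $\tilde x$, which is its unique weak accumulation point and therefore belongs to $\mathcal{L}_{f+g}(\bar x)$. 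The single delicate point in the whole argument is the passage from $\liminf$ to $\lim$ in (b); the rest is direct bookkeeping with Corollary~\ref{lema-para-qF}, Fact~\ref{fato-qF} and weak lower semicontinuity.
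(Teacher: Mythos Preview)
Your treatment of items (a) and (c) matches the paper's proof exactly. The real issue is the passage from $\liminf$ to $\lim$ in item (b), which you yourself flag as the delicate point; as written, your sketch does not close.

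First, the detour through the discarded term $-\alpha_k^2\|w^k-\bar w^{k+1}\|^2$ of Lemma~\ref{lema-para-qF1} is unnecessary and costly. Corollary~\ref{lema-para-qF} applied at $x=x^k$ gives immediately $\|x^{k+1}-x^k\|\le(1+\rho)\beta_k$. This is simpler than your route and, more importantly, it is a \emph{quantitative} bound in terms of $\beta_k$; your estimate $\|x^{k+1}-x^k\|\le(1+\rho)\beta_k+\alpha_k\|w^k-\bar w^{k+1}\|$ only yields the qualitative conclusion $\|x^{k+1}-x^k\|\to 0$, since nothing controls $\|w^k-\bar w^{k+1}\|$.

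This distinction matters because the ingredients you invoke for the oscillation argument---$\liminf_k a_k=0$, $|a_{k+1}-a_k|\to 0$, and convergence of $\|x^k-\bar x\|^2$ (equivalently $\sum_k\alpha_ka_k<\infty$)---do \emph{not} force $\lim_k a_k=0$. A counterexample: with $\alpha_k=\beta_k=1/k$, let $(a_k)$ consist of triangular spikes of height $1$ and half-width $m_j=j$ centred at $n_j=2^j$, and $a_k=0$ elsewhere. Then $|a_{k+1}-a_k|=1/j\to 0$ on spike $j$, $\liminf_k a_k=0$, and $\sum_k\alpha_ka_k\approx\sum_j m_j/n_j=\sum_j j\,2^{-j}<\infty$, yet $\limsup_k a_k=1$. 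So ``such runs cannot recur'' does not follow from what you list.

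What the paper uses is the quantitative one-sided bound $a_k-a_{k+1}\le\bar\rho\,\beta_k$ (from $\|x^{k+1}-x^k\|\le(1+\rho)\beta_k$ and the subgradient inequality for $f+g$). With this in hand one builds excursions $[j_{2k},j_{2k+1}-1]$ from $a\ge\delta$ down to $a\le\delta/2$; telescoping $\sum_{m=j_{2k}}^{j_{2k+1}-1}(a_m-a_{m+1})\ge\delta/2$ against $a_m-a_{m+1}\le\bar\rho\,\beta_m$ forces $\sum_{m=j_{2k}}^{j_{2k+1}-1}\beta_m\ge\delta/(2\bar\rho)$, and since $a_m>\delta/2$ throughout, each excursion contributes at least $\delta^2/(4\bar\rho)$ to $\sum_k\beta_ka_k$, contradicting $\sum_k\beta_ka_k<\infty$. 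The telescoping step is exactly where the $\beta_k$-scale control of $a_k-a_{k+1}$ is indispensable, and it is absent from your sketch.
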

\begin{proof}
By assumption there exists $\bar{x}\in S_{\rm lev}(x^0)$, \emph{i.e.}, $(f+g)(\bar{x})\le (f+g)(x^k)$, for all $k\in \NN$.

\item [ {\bf(a)}] To show that $(x^k)_{k\in\NN}$ is quasi-Fej\'er convergent to
$\mathcal{L}_{f+g}(\bar{x})$ (which is nonempty because $\bar{x}\in
\mathcal{L}_{f+g}(\bar{x})$), we use Corollary \ref{lema-para-qF},
for any $x\in \mathcal{L}_{f+g}(\bar{x})\subseteq {\bf dom}(g)$,
establishing that $ \|x^{k+1}-x\|^2\le\|x^{k}-x\|^2
+(1+2\rho+\rho^2)\beta_k^2$, for all $k\in \NN$. Thus,
$(x^k)_{k\in \NN}$ is quasi-Fej\'er convergent to
$\mathcal{L}_{f+g}(\bar{x})$.
\item [ {\bf(b)}] The sequence $(x^k)_{k\in\NN}$ is
bounded from Fact \ref{fato-qF}(a), and hence it has accumulation
points in the sense of the weak topology. To prove that \begin{equation}\label{limite1_DI*}
\dsty\lim_{k\rightarrow\infty}\,(f+g)(x^k)=(f+g)(\bar{x}),
\end{equation} we use Corollary \ref{lema-para-qF}, with $x=\bar{x}\in
\mathcal{L}_{f+g}(\bar{x})\subseteq {\bf dom}(g)$, to get
\begin{align}\label{para-lim-to-02}\nonumber
\beta_k
\left[(f+g)(x^k)-(f+g)(\bar{x})\right]\le\,&\frac{1}{2}(\|x^{k}-\bar{x}\|^2-\|x^{k+1}-\bar{x}\|^2)
+\frac{1}{2}(1+2\rho+\rho^2)\beta_k^2.
\end{align} Summing, from $k=0$ to $m$, the above
inequality, we have \begin{align*}\sum_{k=0}^m\beta_k
\left[(f+g)(x^k)-(f+g)(\bar{x})\right]\le\,&\frac{1}{2}(
\|x^0-\bar{x}\|^2-\|x^{m+1}-\bar{x}\|^2)+\frac{1}{2}(1+2\rho+\rho^2)\sum_{k=0}^m\beta_k^2,\end{align*}
and taking limit, when $m$ goes to $\infty$,
\begin{equation}\label{converge-el-prod}\sum_{k=0}^{\infty}\beta_k\left[(f+g)(x^k)-(f+g)(\bar{x})\right]<+\infty.\end{equation}
Then, \eqref{converge-el-prod} together with \eqref{pasos-exogenous}
implies that there exists a subsequence
$\left((f+g)(x^{i_k})\right)_{k\in \NN}$ of
$\left((f+g)(x^k)\right)_{k\in \NN}$ such that
\begin{equation}\label{limite1_DI**}
\dsty\liminf_{k\rightarrow\infty}\,\left[(f+g)(x^{i_k})-(f+g)(\bar{x})\right]
= 0.
\end{equation} Indeed, if
\eqref{limite1_DI**} does not hold, then there exist $\sigma> 0$
and $k\ge\tilde{k}$, such that $(f+g)(x^k)-(f+g)(\bar{x})\ge\sigma$
and using \eqref{converge-el-prod}, we get
$$+\infty>\sum_{k=\tilde{k}}^{\infty}\beta_k
\left[(f+g)(x^k)-(f+g)(\bar{x})\right]\ge\sigma\sum_{k=\tilde{k}}^{\infty}\beta_k,
$$ in contradiction with \eqref{pasos-exogenous}. Next, define $\varphi_k:=(f+g)(x^k)-(f+g)(\bar{x})$, which is
positive for all $k$ because $\bar{x}\in S_{\rm lev}(x^0)$. Then, for any
$u^k\in
\partial f(x^k)$ and $w^k\in
\partial g(x^k)$, we get
\begin{align}\label{diferencia}\nonumber
\varphi_k-\varphi_{k+1}\,&=(f+g)(x^k)-(f+g)(x^{k+1})\le\la
u^k+w^k,x^k-x^{k+1}\ra\\\,&\le \|u^k+w^k\|\|x^k-x^{k+1}\|\le
(\zeta+\rho)\|x^k-x^{k+1}\|,
\end{align} where $\zeta>0$ such that $\|u^k\|\le \zeta$, for all $k\in\NN$ ($\zeta$ exists in virtue of the boundedness of
$(x^k)_{k\in \NN}$ and Assumption {\bf A1}) and
$\|w^k\|\le \rho$, for all $k\in\NN$ ($\rho$ exists because
$w^k\in\partial g(x^k)$ are arbitrary and the use of Assumption {\bf A2}).
Using Corollary \ref{lema-para-qF}, with $x=x^k$, we have
$\dsty\|x^k-x^{k+1}\|\le \sqrt{1+2\rho+\rho^2}\cdot\beta_k$, which
together with \eqref{diferencia} implies that
\begin{equation}\label{para-usar}\varphi_k-\varphi_{k+1}\le
\sqrt{1+2\rho+\rho^2}\cdot(\zeta+\rho)\beta_k:=\bar{\rho}\beta_k\end{equation}
for all $k\in\NN$. From \eqref{limite1_DI**}, there exists a
subsequence $\left(\varphi_{i_k}\right)_{k\in\NN}$ of
$\left(\varphi_k\right)_{k\in\NN}$ such that
$\lim_{k\rightarrow\infty}\varphi_{i_k}=0 $. If the claim given in
\eqref{limite1_DI*} does not hold, then there exists some $\delta>0$
and a subsequence $\left(\varphi_{\ell_k}\right)_{k\in\NN}$ of
$\left(\varphi_k\right)_{k\in\NN}$, such that
$\varphi_{\ell_k}\ge\delta$ for all $k\in\NN$. Thus, we can
construct a third subsequence $\left(\varphi_{j_k}\right)_{k\in\NN}$
of $\left(\varphi_k\right)_{k\in\NN}$, where the indices $j_k$ are
chosen in the following way:
$$\dsty j_0:=\dsty\min\{m\ge0 \mid \varphi_m\ge\delta\},$$
$$\dsty j_{2k+1}:=\dsty \min\{m\ge j_{2k}\mid \varphi_m\le\delta/2\},$$
$$ j_{2k+2}:=\dsty\min\{m\ge j_{2k+1}\mid \varphi_m\ge\delta\},$$
for each $k$. The existence of the subsequences
$\left(\varphi_{i_k}\right)_{k\in\NN}$,
$\left(\varphi_{\ell_k}\right)_{k\in\NN}$ of
$\left(\varphi_k\right)_{k\in \NN}$, guarantees that the subsequence
$\left(\varphi_{j_k}\right)_{k\in \NN}$ of
$\left(\varphi_k\right)_{k\in \NN}$ is well-defined for all $k\ge0$.
It follows from the definition of $j_k$ that
\begin{equation}\label{17}
\varphi_m\ge \delta \quad \mbox{for}\quad j_{2k}\le m\le j_{2k+1}-1
\end{equation}
\begin{equation*}\label{18}
\varphi_m\le \frac{\delta}{2} \quad \mbox{for}\quad j_{2k+1}\le m\le
j_{2k+2}-1
\end{equation*} for all $k$,
and hence
\begin{equation}\label{19}
\varphi_{j_{2k}}-\varphi_{j_{2k+1}}\ge \frac{\delta}{2},
\end{equation}
for all $k\in\NN$. In view of \eqref{converge-el-prod} and remind
that $\varphi_k=(f+g)(x^k)-(f+g)(\bar{x})\ge0$ for all $k\in \NN$,
\begin{eqnarray*}\label{desiguldad-imposible}\nonumber\dsty +\infty\,&>\,&\sum_{k=0}^{\infty} \beta_k\varphi_k\ge \sum_{k=0}^{\infty}
\sum_{m=j_{2k}}^{j_{2k+1}-1} \beta_m\varphi_m \ge \frac{\delta}{2}
\sum_{k=0}^{\infty} \sum_{m=j_{2k}}^{j_{2k+1}-1} \beta_m\\\,&=\,&
\frac{\delta}{2\bar{\rho}} \sum_{k=0}^{\infty}
\sum_{m=j_{2k}}^{j_{2k+1}-1} \bar{\rho} \beta_m \ge
\frac{\delta}{2\bar{\rho}} \sum_{k=0}^{\infty}
\sum_{m=j_{2k}}^{j_{2k+1}-1} (\varphi_m-\varphi_{m+1})
=\frac{\delta}{2\bar{\rho}} \sum_{k=0}^{\infty}
(\varphi_{j_{2k}}-\varphi_{j_{2k+1}})\\\,&\ge\,&\frac{\delta}{2\bar{\rho}}
\sum_{k=0}^{\infty}\frac{\delta}{2}=+\infty ,\end{eqnarray*} where
we have used \eqref{17} in the second inequality and
\eqref{para-usar} in the third inequality and \eqref{19} in the last
one. Thus,
$\dsty\lim_{k\rightarrow\infty}\,(f+g)(x^k)=(f+g)(\bar{x})$,
establishing {\bf(b)}.
\item [ {\bf(c)}] Let $\tilde{x}$ be a weak accumulation point of $(x^k)_{k\in\NN}$, and note that $\tilde{x}$ exists by Item {\bf(a)} and Fact \ref{fato-qF}(a).
From now on, we use $(x^{i_k})_{k\in \NN}$ to denote any
subsequence of $(x^k)_{k\in\NN}$ that converges weakly to
$\tilde{x}$. Since $f+g$ is weakly lower semicontinuous, using
\eqref{limite1_DI*}, we get
$$
(f+g)(\tilde{x})\le
\liminf_{k\rightarrow\infty}(f+g)(x^{i_k})=\lim_{k\rightarrow\infty}(f+g)(x^{k})=(f+g)(\bar{x}),
$$
implying that $(f+g)(\tilde{x})\le(f+g)(\bar{x})$ and thus
$\tilde{x}\in \mathcal{L}_{f+g}(\bar{x})$. As consequence, all weak
accumulation points of $(x^k)_{k\in \NN}$ belong to
$\mathcal{L}_{f+g}(\bar{x})$ and since $(x^k)_{k\in \NN}$ is
quasi-Fej\'er convergent to $\mathcal{L}_{f+g}(\bar{x})$, we get
that $(x^k)_{k\in \NN}$ converges weakly to $\tilde{x}\in
\mathcal{L}_{f+g}(\bar{x})$ from Fact \ref{fato-qF}(b).
\end{proof}

\begin{theorem}\label{ptos-de-acum2*} Let $(x^k)_{k\in \NN}$ be the
sequence generated by {\bf PSS
Method} with exogenous stepsizes. Then,
\item [ {\bf(a)}] $\liminf_{k\rightarrow\infty}(f+g)(x^k)=\inf_{x\in \HH}
(f+g)(x)=s_*$ (possibly $s_*=-\infty$).
\item [ {\bf(b)}]If $S_*\neq \emptyset$, then $\lim_{k\to\infty}(f+g)(x^k)=\min_{x\in\HH}(f+g)(x)$ and 
$(x^k)_{k\in\NN}$ converges weakly to some $\bar{x}\in
S_*$.
\item [ {\bf(c)}] If $S_*=\emptyset$, then $(x^k)_{k\in \NN}$ is
unbounded.
\end{theorem}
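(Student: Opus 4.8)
The plan is to reduce all three parts to Theorem~\ref{ptos-de-acum2} by means of a simple restart observation: for every $N\in\NN$ the tail $(x^{N+k})_{k\in\NN}$ is precisely the sequence produced by {\bf PSS Method} started from $x^N\in{\bf dom}(g)$ with stepsizes $\alpha_{N+k}=\beta_{N+k}/\eta_{N+k}$, and the shifted sequence $(\beta_{N+k})_{k\in\NN}$ still obeys \eqref{pasos-exogenous} (a tail of a divergent series of positive terms diverges, and a tail of a summable series is summable). Hence Theorem~\ref{ptos-de-acum2} applies verbatim to any such tail, with the set \eqref{Slev} replaced by its analogue built from $x^N$.

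For {\bf(a)}, since $x^k\in{\bf dom}(g)\subseteq{\bf dom}(f)$ we have $(f+g)(x^k)\ge s_*$ for every $k$, so $\liminf_k(f+g)(x^k)\ge s_*$. For the reverse inequality, suppose $\ell:=\liminf_k(f+g)(x^k)>s_*$ and choose $\bar x$ with $s_*\le(f+g)(\bar x)<\ell$; then $(f+g)(\bar x)<+\infty$ forces $\bar x\in{\bf dom}(g)$. By definition of the $\liminf$ there is $N\in\NN$ with $(f+g)(x^{N+k})\ge(f+g)(\bar x)$ for all $k\in\NN$, i.e. $\bar x$ lies in the level set \eqref{Slev} for the sequence restarted at $x^N$. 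Applying Theorem~\ref{ptos-de-acum2}{\bf(b)} to that tail gives $\lim_k(f+g)(x^{N+k})=(f+g)(\bar x)$, whence $\liminf_k(f+g)(x^k)=(f+g)(\bar x)<\ell$, a contradiction; therefore $\ell=s_*$.

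For {\bf(b)}, if $S_*\neq\emptyset$ fix $x_*\in S_*$; then $(f+g)(x_*)=s_*\le(f+g)(x^k)$ for all $k$, so $x_*\in S_{\rm lev}(x^0)$ and Theorem~\ref{ptos-de-acum2} applies with $\bar x=x_*$. Its part {\bf(b)} yields $\lim_k(f+g)(x^k)=(f+g)(x_*)=\min_{x\in\HH}(f+g)(x)$, and its part {\bf(c)} yields weak convergence of $(x^k)_{k\in\NN}$ to some $\tilde x\in\mathcal{L}_{f+g}(x_*)$; since $\mathcal{L}_{f+g}(x_*)=\{x\in{\bf dom}(g):(f+g)(x)\le s_*\}=S_*$, we conclude $\tilde x\in S_*$.

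For {\bf(c)} I argue by contraposition: assume $(x^k)_{k\in\NN}$ is bounded. By {\bf(a)} there is a subsequence along which $(f+g)(x^k)\to s_*$; being bounded, it has a further subsequence converging weakly to some $\tilde x$, still with functional values tending to $s_*$. Weak lower semicontinuity of $f+g$ then gives $(f+g)(\tilde x)\le s_*$; if $s_*\in\RR$ this means $\tilde x\in S_*$, while if $s_*=-\infty$ it contradicts properness of $f+g$. In either case $S_*\neq\emptyset$, so $S_*=\emptyset$ forces $(x^k)_{k\in\NN}$ to be unbounded. The only point demanding care is the restart bookkeeping used in {\bf(a)}: verifying that the shifted stepsizes still satisfy \eqref{pasos-exogenous}, so that Theorem~\ref{ptos-de-acum2} genuinely applies to the tail sequence.
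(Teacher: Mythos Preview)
Your proof is correct and follows essentially the same route as the paper: parts {\bf(a)} and {\bf(b)} are reduced to Theorem~\ref{ptos-de-acum2}, and {\bf(c)} is argued by contraposition using part {\bf(a)} together with weak lower semicontinuity of $f+g$. The only difference is cosmetic---where the paper, in part {\bf(a)}, writes ``since $\bar{k}$ is finite we can assume without loss of generality that $(f+g)(\hat{x})\le(f+g)(x^k)$ for all $k$,'' you make the underlying tail/restart argument explicit, which is in fact the cleaner way to justify that step.
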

\begin{proof}
\item [ {\bf(a)}] Since $(x^k)_{k\in \NN}\subset {\bf dom}(g)$, we get
$s_*\le\liminf_{k\rightarrow\infty}(f+g)(x^k)$. Suppose that
$s_*<\liminf_{k\rightarrow\infty}(f+g)(x^k)$. Hence, there exists
$\hat{x}$ such that
\begin{equation}\label{lim-optimo}(f+g)(\hat{x})<\liminf_{k\rightarrow\infty}(f+g)(x^k).\end{equation}
It follows from \eqref{lim-optimo} that there exists $\bar{k}\in\NN$
such that $(f+g)(\hat{x})\le (f+g)(x^k)$ for all $k\ge \bar{k}$.
Since $\bar{k}$ is finite we can assume without loss of generality
that $(f+g)(\hat{x})\le (f+g)(x^k)$ for all $k\in \NN$. Using the
definition of $S_{\rm lev}(x^0)$, given in \eqref{Slev}, we have that
$\hat{x}\in S_{\rm lev}(x^0)$. By Theorem \ref{ptos-de-acum2}(b)
$\lim_{k\rightarrow\infty}(f+g)(x^k)=(f+g)(\hat{x})$, in
contradiction with \eqref{lim-optimo}. 

\item [ {\bf(b)}] Since $S_*\neq \emptyset$, take $x_*\in S_*$ and note that this implies $\mathcal{L}_{f+g}(x_*)=S_*$.
Since $(x^k)_{k\in \NN}\subset {\bf dom}(g)$, we get
$(f+g)(x_*)\le (f+g)(x^k)$ for all $k\in \NN$ implying that $x_*\in
S_{\rm lev}(x^0)$. By applying items (b) and (c) of Theorem \ref{ptos-de-acum2}, at $\bar{x}=x_*$, we get
that $\lim_{k\to\infty}(f+g)(x^k)=(f+g)(x_*)$ and $(x^k)_{k\in \NN}$ converges weakly to some
$\tilde{x}\in S_*$, respectively.

\item [ {\bf(c)}] Assume that $S_*$ is empty but $(x^k)_{k\in \NN}$ is bounded. Let $(x^{\ell_k})_{k\in \NN}$ be a subsequence of
$(x^k)_{k\in \NN}$ such that
$\lim_{k\rightarrow\infty}(f+g)(x^{\ell_k})=\liminf_{k\rightarrow\infty}(f+g)(x^k)$.
Since $(x^{\ell_k})_{k\in \NN}$ is bounded, without loss
of generality (\emph{i.e.}, refining $(x^{\ell_k})_{k\in \NN}$ if
necessary), we may assume that $(x^{\ell_k})_{k\in \NN}$
converges weakly to some $\bar{x}\in {\bf dom}(g)$. By the weak lower
semicontinuity of $f+g$ on ${\bf dom}(g)$,
\begin{equation}\label{ult-eq}(f+g)(\bar{x})\le\liminf_{k\rightarrow\infty}(f+g)(x^{\ell_k})
=\lim_{k\rightarrow\infty}(f+g)(x^{\ell_k})=\liminf_{k\rightarrow\infty}(f+g)(x^k)=s_*,\end{equation}
using Item {\bf(a)} in the last equality. By \eqref{ult-eq}, $\bar{x}\in
S_*$, in contradiction with the hypothesis and the result follows.
\end{proof}

For exogenous stepsizes, Theorem \ref{ptos-de-acum2*}(a) guarantees the
convergence of $\left((f+g)(x^k)\right)_{k\in \NN}$ to the optimal
value of problem \eqref{prob}, \emph{i.e.}, $
\liminf_{k\rightarrow\infty}(f+g)(x^k)=s_*, $ implying the
convergence of $\left((f+g)^k_{\rm best}\right)_{k\in \NN}$, defined
in \eqref{mejor-valor}, to $s_*$. It is important to mention that in
the proof of the above two crucial results, we have used a similar idea recently
presented in \cite{yun-sub} for a different instance.

\noindent In the following we present a direct consequence of Lemmas
\ref{f-best-alphak} and \ref{f-best-alphak2}, when the stepsizes satisfy \eqref{pasos-exogenous}.
\begin{corollary}
Let $(\bar{x}^k)_{k\in \NN}$ be the ergodic sequence
defined by \eqref{egodic-seq} and $\left(\beta_k\right)_{k\in \NN}$
as \eqref{pasos-exogenous}. If $S_*\neq \emptyset$, then, for all
$k\in\NN$,
\begin{equation*}
(f+g)_{\rm best}^k-\min_{x\in\HH}(f+g)(x)\le \zeta \frac{[{\rm
dist}(x^0,S_*)]^2+(1+2\rho+\rho^2)\sum_{i=0}^k \beta_i^2}{
2\sum_{i=0}^k \beta_i}
\end{equation*}
and
\begin{equation*}
(f+g)(\bar{x}^k)-\min_{x\in\HH}(f+g)(x)\le \zeta \frac{[{\rm
dist}(x^0,S_*)]^2+(1+2\rho+\rho^2)\sum_{i=0}^k \beta_i^2}{
2\sum_{i=0}^k \beta_i},
\end{equation*} where $\zeta>0$ and $\rho\ge0$ are as in Assumptions {\bf A1} and {\bf A2}, respectively.
\end{corollary}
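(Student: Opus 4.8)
The plan is to re-run, with the exogenous step sizes $\alpha_k=\beta_k/\eta_k$, the telescoping argument behind Lemmas~\ref{f-best-alphak} and~\ref{f-best-alphak2}, but starting from Corollary~\ref{lema-para-qF} in place of Lemma~\ref{lema-para-qF1}, since Corollary~\ref{lema-para-qF} already carries the step $\beta_k/\eta_k$ and the constant $1+2\rho+\rho^2$. First I would record the uniform subgradient bound that will let us pass from $\sum_{i=0}^k\alpha_i$ to $\sum_{i=0}^k\beta_i$ in the denominator. Since $S_*\neq\emptyset$, any $x_*\in S_*$ lies in $S_{\rm lev}(x^0)$ (indeed $S_*\subseteq S_{\rm lev}(x^0)$, as noted after \eqref{Slev}), so by Theorem~\ref{ptos-de-acum2}(a) and Fact~\ref{fato-qF}(a) the sequence $(x^k)_{k\in\NN}$ is bounded; Assumption~{\bf A1} then provides $\zeta>0$ with $\|u^k\|\le\zeta$ for all $k$ (exactly the step used in the proof of Theorem~\ref{ptos-de-acum2}(b)), and since any larger value of $\zeta$ is still admissible we may assume $\zeta\ge1$. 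Consequently $\eta_k=\max\{1,\|u^k\|\}\le\zeta$, hence $\alpha_k=\beta_k/\eta_k\ge\beta_k/\zeta$ and $\sum_{i=0}^k\alpha_i\ge\frac{1}{\zeta}\sum_{i=0}^k\beta_i$ for every $k$.

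Next, set $x_*:={\bf P}_{S_*}(x^0)$ and apply Corollary~\ref{lema-para-qF} at $x=x_*$ for $i=0,1,\dots,k$; telescoping as in \eqref{para-lema-ergodic} yields
\begin{equation}\label{plan-tele}
0\le\|x^{k+1}-x_*\|^2\le[{\rm dist}(x^0,S_*)]^2+2\sum_{i=0}^k\alpha_i\left[(f+g)(x_*)-(f+g)(x^i)\right]+(1+2\rho+\rho^2)\sum_{i=0}^k\beta_i^2
\end{equation}
for all $k\in\NN$. For the best-value estimate I would use $(f+g)(x_*)-(f+g)(x^i)=\min_{x\in\HH}(f+g)(x)-(f+g)(x^i)\le\min_{x\in\HH}(f+g)(x)-(f+g)^k_{\rm best}\le0$ for each $i\le k$, substitute into \eqref{plan-tele}, discard the nonnegative term $\|x^{k+1}-x_*\|^2$, and solve for $(f+g)^k_{\rm best}-\min_{x\in\HH}(f+g)(x)$ to obtain the bound with $2\sum_{i=0}^k\alpha_i$ in the denominator; replacing $\sum_{i=0}^k\alpha_i$ by the smaller quantity $\frac{1}{\zeta}\sum_{i=0}^k\beta_i$ then produces the claimed inequality, the factor $\zeta$ appearing in the process. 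For the ergodic estimate I would instead keep the nonnegative terms $(f+g)(x^i)-\min_{x\in\HH}(f+g)(x)$, divide \eqref{plan-tele} by $2\sigma_k$ with $\sigma_k:=\sum_{i=0}^k\alpha_i$, and combine with the convexity of $f+g$ and \eqref{egodic-seq} (as at the end of the proof of Lemma~\ref{f-best-alphak2}) to get $(f+g)(\bar{x}^k)-\min_{x\in\HH}(f+g)(x)\le\frac{1}{\sigma_k}\sum_{i=0}^k\alpha_i\left[(f+g)(x^i)-\min_{x\in\HH}(f+g)(x)\right]\le\frac{[{\rm dist}(x^0,S_*)]^2+(1+2\rho+\rho^2)\sum_{i=0}^k\beta_i^2}{2\sigma_k}$; one further application of $\sigma_k\ge\frac{1}{\zeta}\sum_{i=0}^k\beta_i$ finishes it.

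No step here is genuinely hard; the content is essentially Corollary~\ref{lema-para-qF} plus bookkeeping. The single point needing care is the replacement of $\sum_{i=0}^k\alpha_i$ by $\sum_{i=0}^k\beta_i$ in the denominator, which is what produces the factor $\zeta$ out front and which rests on the uniform bound $\eta_i\le\zeta$; that bound is available precisely because the hypothesis $S_*\neq\emptyset$ forces $(x^k)_{k\in\NN}$ to be bounded, so that Assumption~{\bf A1} applies. Assumption~{\bf A2} enters only implicitly, through the choice $w^i\in\partial g(x^i)$ with $\|w^i\|\le\rho$ already made in the proof of Corollary~\ref{lema-para-qF}.
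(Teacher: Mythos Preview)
Your argument is correct and is essentially the approach the paper intends: the paper records the result as a direct consequence of Lemmas~\ref{f-best-alphak} and~\ref{f-best-alphak2} for the exogenous stepsizes, and your telescoping via Corollary~\ref{lema-para-qF} together with the bound $\eta_i\le\zeta$ (valid once $(x^k)_{k\in\NN}$ is bounded, which follows from Theorem~\ref{ptos-de-acum2}(a) and Fact~\ref{fato-qF}(a)) is exactly how one makes that derivation explicit. The only cosmetic point is your assumption $\zeta\ge1$, which is harmless since the stated bound is monotone in $\zeta$.
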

The above corollary shows that if we assume existence of solutions,
the expected error of the iterates generated by {\bf PSS
 Method} with
the exogenous stepsizes \eqref{pasos-exogenous} after $k$ iterations
is $\mathcal{O}\left((\sum_{i=0}^k \beta_i)^{-1}\right)$. Since
$\left(\beta_k\right)_{k\in \NN}$ satisfies \eqref{pasos-exogenous}
the best performance of the iteration (in term of functional values)
is archived for example taking $\beta_k\cong1/k^r$ with $r$ bigger
than $1/2$, but near of this value, for all $k$.
\subsection{Polyak stepsizes}
In this subsection we analyze the convergence of {\bf PSS
 Method}
using Polyak stepsizes. Having chose any
$w^k\in
\partial g(x^k)$ and denoted $\rho_k:=\|w^k\|$ for all $k\in\NN$. Then define, for all
$k\in\NN$,
\begin{equation}\label{polyak-stepsize}
\alpha_k=\gamma_k\frac{(f+g)(x^k)-s_k}{\|u^k\|^2+2\rho_k\|u^k\|+\rho_k^2},
\end{equation} where $0<\gamma\le\gamma_k\le2-\gamma$. We assume that $s_k$ a monotone decreasing variable
target value approximating $s_*:=\inf\{(f+g)(x) :  x\in\HH\}$ is
available, and satisfies that $s_k\le (f+g)(x^k)$ for all $k\in\NN$.
When $s_*$ is known, the simplest variant of the stepsizes proposed
in \eqref{polyak-stepsize} is obtained selecting the stepsizes
\begin{equation}\label{iteracion-poljak}
\alpha_k=\gamma_k\frac{(f+g)(x^k)-s_*}{\|u^k\|^2+2\rho_k\|u^k\|+\rho_k^2},
\end{equation} for all $k\in\NN$. Unfortunately, to find an optimal solution, scheme \eqref{iteracion-poljak} requires prior knowledge of the optimal objective function
value $s_*$. As $s_*$ is usually unknown, we prefer to do our analysis over \eqref{polyak-stepsize}, and replace $s_*$ by the variable target value $s_k$. When $g$ is the indicator function of a closed and
convex set  further discussion about how to choose $s_k$ is
presented in the literature for problems where a good upper or lower
bound of the optimal objective function value is available; see, for
instance, \cite{held,KAC91,sherali-1997}.

\noindent Now we present a direct consequence of Lemma \ref{lema-para-qF1}. Denote $\mathcal{L}_{f+g}(s):= \left\{x\in {\bf dom}(g) :
(f+g)(x)\le s\right\}$.
\begin{corollary}\label{lema-para-qF2}
Suppose that $\lim_{k\rightarrow\infty}s_k=\tilde{s}\ge s_*$ and let
any $x\in \mathcal{L}_{f+g}(\tilde{s})$. Then,
$$\|x^{k+1}-x\|^2\le\|x^{k}-x\|^2
-\gamma(2-\gamma)\frac{\left[s_k-(f+g)(x^k)\right]^2}{\|u^k\|^2+2\rho_k\|u^k\|+\rho_k^2},$$
for all $k\in \NN$.
\end{corollary}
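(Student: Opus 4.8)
The plan is to specialize Lemma~\ref{lema-para-qF1} to the Polyak stepsize \eqref{polyak-stepsize} and then show that the resulting quadratic-in-$\alpha_k$ correction term is controlled by $-\gamma(2-\gamma)$ times the appropriate Polyak ratio. First I would fix $x\in\mathcal{L}_{f+g}(\tilde{s})$ and choose $w^k\in\partial g(x^k)$ with $\|w^k\|=\rho_k$, so that the denominator appearing in \eqref{polyak-stepsize} is exactly $\|u^k\|^2+2\rho_k\|u^k\|+\rho_k^2=(\|u^k\|+\rho_k)^2\ge\|u^k+w^k\|^2$ by the triangle inequality; call this quantity $D_k$. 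Applying Lemma~\ref{lema-para-qF1} with this particular $w^k$ gives
$$
\|x^{k+1}-x\|^2\le\|x^k-x\|^2+2\alpha_k\bigl[(f+g)(x)-(f+g)(x^k)\bigr]+\alpha_k^2\|u^k+w^k\|^2.
$$
Since $x\in\mathcal{L}_{f+g}(\tilde{s})$ we have $(f+g)(x)\le\tilde{s}$, and since $s_k$ decreases to $\tilde{s}$ we have $\tilde{s}\le s_k$, hence $(f+g)(x)\le s_k$; therefore $(f+g)(x)-(f+g)(x^k)\le s_k-(f+g)(x^k)$, and the bracketed term can be replaced by $s_k-(f+g)(x^k)$ (note this is $\le 0$ by the assumption $s_k\le(f+g)(x^k)$, so $\alpha_k\ge0$ and the replacement preserves the inequality).

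Next I would substitute $\alpha_k=\gamma_k\,\dfrac{(f+g)(x^k)-s_k}{D_k}$ and bound $\|u^k+w^k\|^2\le D_k$. Writing $\Delta_k:=(f+g)(x^k)-s_k\ge0$, the last two terms become
$$
2\gamma_k\frac{\Delta_k}{D_k}\cdot(-\Delta_k)+\gamma_k^2\frac{\Delta_k^2}{D_k^2}\cdot\|u^k+w^k\|^2
\le -2\gamma_k\frac{\Delta_k^2}{D_k}+\gamma_k^2\frac{\Delta_k^2}{D_k}
=-\gamma_k(2-\gamma_k)\frac{\Delta_k^2}{D_k}.
$$
Finally, the elementary fact that $t\mapsto t(2-t)$ is increasing on $[0,1]$ and symmetric about $t=1$, together with $\gamma\le\gamma_k\le2-\gamma$, yields $\gamma_k(2-\gamma_k)\ge\gamma(2-\gamma)$; since $\Delta_k^2=[s_k-(f+g)(x^k)]^2$ and $D_k=\|u^k\|^2+2\rho_k\|u^k\|+\rho_k^2$, this gives exactly the claimed estimate.

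The only mildly delicate point is the direction of the inequality in the two replacements — passing from $(f+g)(x)-(f+g)(x^k)$ to $s_k-(f+g)(x^k)$ and from $\|u^k+w^k\|^2$ to $D_k$ — each of which is valid only because the sign of $\alpha_k$ (respectively $\alpha_k^2$) is known; I would make sure to flag that $\alpha_k\ge0$ follows from $s_k\le(f+g)(x^k)$ before using it. Everything else is the routine completion-of-the-square already packaged in Lemma~\ref{lema-para-qF1}, so no genuine obstacle is expected.
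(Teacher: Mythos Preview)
Your proof is correct and follows essentially the same route as the paper: apply Lemma~\ref{lema-para-qF1} with the same $w^k$ used to define $\rho_k$, substitute the Polyak stepsize \eqref{polyak-stepsize}, use the chain $(f+g)(x)\le\tilde{s}\le s_k\le(f+g)(x^k)$ to replace $(f+g)(x)-(f+g)(x^k)$ by $s_k-(f+g)(x^k)$, and then bound $\gamma_k(2-\gamma_k)\ge\gamma(2-\gamma)$. Your explicit mention of the triangle-inequality bound $\|u^k+w^k\|^2\le(\|u^k\|+\rho_k)^2=D_k$ is a step the paper uses implicitly when passing from $\alpha_k^2\|u^k+w^k\|^2$ to $\gamma_k^2[s_k-(f+g)(x^k)]^2/D_k$, so your write-up is, if anything, slightly more careful.
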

\begin{proof} Take
$x\in\mathcal{L}_{f+g}(\tilde{s})= \left\{x\in {\bf dom}(g) :
(f+g)(x)\le \tilde{s}\right\}$. Since $(s_k)_{k\in\NN}$ is a
monotone decreasing sequence convergent to $\tilde{s}$, which is
less than the function values of the iterates,
\begin{equation}\label{****}
(f+g)(x^k)\ge s_k\ge\tilde{s}\ge (f+g)(x), \;\; \forall x\in
\mathcal{L}_{f+g}(\tilde{s}),
\end{equation} for all $k\in\NN$. Then, applying Lemma \ref{lema-para-qF1} and using \eqref{****}, we
get, for all $k\in\NN$,
\begin{align}\label{polyak-ineq1}\nonumber
\|x^{k+1}-x\|^2\le\,&\|x^{k}-x\|^2-2\gamma_k\frac{\left[s_k-(f+g)(x^k)\right]\left[(f+g)(x)-(f+g)(x^k)\right]}{\|u^k\|^2+2\rho_k\|u^k\|+\rho_k^2}\\\nonumber
&+\gamma_k^2\frac{\left[s_k-(f+g)(x^k)\right]^2}{\|u^k\|^2+2\rho_k\|u^k\|+\rho_k^2}\\\nonumber
\le\,&
\|x^{k}-x\|^2-\gamma_k(2-\gamma_k)\frac{\left[s_k-(f+g)(x^k)\right]^2}{\|u^k\|^2+2\rho_k\|u^k\|+\rho_k^2}\\\le\,&
\|x^{k}-x\|^2-\gamma(2-\gamma)\frac{\left[s_k-(f+g)(x^k)\right]^2}{\|u^k\|^2+2\rho_k\|u^k\|+\rho_k^2},
\end{align} where we used that $x\in \mathcal{L}_{f+g}(\tilde{s})$, \eqref{polyak-stepsize} and \eqref{****} in the second inequality. The result follows from \eqref{polyak-ineq1}.
\end{proof}
\noindent Now, we prove the first main result of this subsection in
the following theorem.
\begin{theorem}\label{ptos-de-acum3} Let $(x^k)_{k\in \NN}$ be the
sequence generated by {\bf PSS
 Method} with $\alpha_k$ as in
\eqref{polyak-stepsize}. If  $\lim_{k\rightarrow\infty}
s_k=\tilde{s}\ge s_*$ and $\mathcal{L}_{f+g}(\tilde{s})\neq
\emptyset$, then
\item [ {\bf(a)}] $(x^k)_{k\in \NN}$ is Fej\'er convergent to $\mathcal{L}_{f+g}(\tilde{s})$.
\item [ {\bf(b)}] $\lim_{k\rightarrow\infty}\,(f+g)(x^k)=\tilde{s}.$
\item [ {\bf(c)}] $(x^k)_{k\in \NN}$ is weakly convergent to some $\tilde{x}\in
\mathcal{L}_{f+g}(\tilde{s})$.
\end{theorem}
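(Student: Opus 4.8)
The plan is to follow the classical Polyak-stepsize Fej\'er-convergence scheme, using Corollary~\ref{lema-para-qF2} as the driving inequality. For part~{\bf(a)}, observe that the constraint $0<\gamma\le\gamma_k\le 2-\gamma$ forces $\gamma\le 1$, hence $\gamma(2-\gamma)>0$; since the term subtracted on the right-hand side of Corollary~\ref{lema-para-qF2} is therefore nonnegative, we get $\|x^{k+1}-x\|^2\le\|x^k-x\|^2$ for every $x\in\mathcal{L}_{f+g}(\tilde{s})$ and every $k\in\NN$. As $\mathcal{L}_{f+g}(\tilde{s})\neq\emptyset$ by hypothesis, this is precisely Fej\'er convergence of $(x^k)_{k\in\NN}$ to $\mathcal{L}_{f+g}(\tilde{s})$ (with null error sequence).

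For part~{\bf(b)}, fix $x\in\mathcal{L}_{f+g}(\tilde{s})$ and sum the inequality of Corollary~\ref{lema-para-qF2} from $k=0$ to $m$; the right-hand side telescopes and is bounded above by $\|x^0-x\|^2$, so letting $m\to\infty$ gives
\[
\gamma(2-\gamma)\sum_{k=0}^{\infty}\frac{\left[s_k-(f+g)(x^k)\right]^2}{\|u^k\|^2+2\rho_k\|u^k\|+\rho_k^2}<+\infty.
\]
To extract $(f+g)(x^k)-s_k\to 0$ I would bound the denominators uniformly: by part~{\bf(a)} and Fact~\ref{fato-qF}(a) the sequence $(x^k)_{k\in\NN}$ is bounded, so Assumption~{\bf A1} gives $\|u^k\|\le\zeta$ for all $k$, and Assumption~{\bf A2} lets us take (as the stepsize rule permits) $w^k\in\partial g(x^k)$ with $\rho_k=\|w^k\|\le\rho$; hence $\|u^k\|^2+2\rho_k\|u^k\|+\rho_k^2\le(\zeta+\rho)^2$. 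Convergence of the series then forces $\left[s_k-(f+g)(x^k)\right]^2\to 0$, i.e. $(f+g)(x^k)-s_k\to 0$; combined with $s_k\to\tilde{s}$ and $s_k\le(f+g)(x^k)$ this yields $\lim_{k\to\infty}(f+g)(x^k)=\tilde{s}$.

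For part~{\bf(c)}, part~{\bf(a)} makes $(x^k)_{k\in\NN}$ bounded, so it admits weak accumulation points. If a subsequence $(x^{i_k})_{k\in\NN}$ converges weakly to $\tilde{x}$, then weak lower semicontinuity of the proper lower semicontinuous convex function $f+g$ together with part~{\bf(b)} gives $(f+g)(\tilde{x})\le\liminf_{k\to\infty}(f+g)(x^{i_k})=\tilde{s}$, so $\tilde{x}\in\mathcal{L}_{f+g}(\tilde{s})$. Thus every weak accumulation point of $(x^k)_{k\in\NN}$ lies in $\mathcal{L}_{f+g}(\tilde{s})$, and since the sequence is quasi-Fej\'er convergent to that set by part~{\bf(a)}, Fact~\ref{fato-qF}(b) gives weak convergence of the whole sequence to some $\tilde{x}\in\mathcal{L}_{f+g}(\tilde{s})$.

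The only genuinely delicate point is the uniform bound on the denominators needed in part~{\bf(b)}: it hinges on first knowing the iterates are bounded (available only after part~{\bf(a)}) and then on invoking \emph{both} Assumptions {\bf A1} and {\bf A2}; the rest is telescoping and routine Fej\'er/weak-lsc bookkeeping.
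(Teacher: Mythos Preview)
Your proposal is correct and follows essentially the same route as the paper's proof: part~{\bf(a)} is read off directly from Corollary~\ref{lema-para-qF2}, part~{\bf(b)} combines the telescoping sum with the uniform bound $(\zeta+\rho)^2$ on the denominators (via boundedness from~{\bf(a)} and Assumptions~{\bf A1}--{\bf A2}), and part~{\bf(c)} is the standard weak-lsc plus Fej\'er argument. The only cosmetic difference is that the paper bounds the denominator \emph{before} summing (obtaining $\gamma(2-\gamma)[s_k-(f+g)(x^k)]^2\le\hat\rho[\|x^k-x\|^2-\|x^{k+1}-x\|^2]$ and then telescoping), whereas you sum first and bound afterward; the content is identical.
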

\begin{proof}\item [ {\bf(a)}] It is direct consequence of Corollary
\ref{lema-para-qF2}.
\item [ {\bf(b)}] By Item {\bf(a)}, $(x^k)_{k\in \NN}$ is bounded. By using Corollary
\ref{lema-para-qF2}, at any $x\in
\mathcal{L}_{f+g}(\tilde{s})$, we get
\begin{align}\label{des-para-lim*}
\gamma(2-\gamma)\left[s_k-(f+g)(x^k)\right]^2\le\,&
\left(\|u^k\|^2+2\rho_k\|u^k\|+\rho_k^2\right)\left[\|x^{k}-x\|^2-\|x^{k+1}-x\|^2\right]\\
\le \,&\left(\zeta^2+2\rho
\zeta+\rho^2\right)\left[\|x^{k}-x\|^2-\|x^{k+1}-x\|^2\right]\nonumber\\:=\,&\hat{\rho}\left[\|x^{k}-x\|^2-\|x^{k+1}-x\|^2\right],\label{des-para-lim}\end{align}
where the last inequality following from Assumptions {\bf A1} and
{\bf A2} ($\|u^k\|\le \zeta$ and $\rho_k=\|w^k\|\le \rho $ for all $k\in\NN$).
Summing \eqref{des-para-lim}, over $k=0$ to $m$, we obtain
\begin{align*}\label{des-para-lim2}
\gamma(2-\gamma)\sum_{k=0}^m\left[s_k-(f+g)(x^k)\right]^2\le
\,&\hat{\rho}\left[\|x^{0}-x\|^2-\|x^{m+1}-x\|^2\right]\le
\hat{\rho}\|x^{0}-x\|^2 .\end{align*} Taking limit when $m$ goes to
$\infty$, we get the desired result.
\item[ {\bf(c)}] From Item {\bf(b)}, if $\tilde{s}=\lim_{k\rightarrow\infty}
s_k$ then $\lim_{k\rightarrow\infty}\,(f+g)(x^k)=\tilde{s}$. Let
$\tilde{x}$ be a weak accumulation point of
$(x^k)_{k\in\NN}$, which exists by the boundedness of
$(x^k)_{k\in\NN}$ direct consequence of Item {\bf(a)}. From
now on, we denote $(x^{\ell_k})_{k\in \NN}$ any
subsequence of $(x^k)_{k\in\NN}$, which converges weakly to
$\tilde{x}$. Since $f+g$ is weakly lower semicontinuous, we get $
(f+g)(\tilde{x})\le
\liminf_{k\rightarrow\infty}(f+g)(x^{\ell_k})=\lim_{k\rightarrow\infty}(f+g)(x^{k})=\tilde{s},
$ implying that $(f+g)(\tilde{x})\le\tilde{s}$ and thus
$\tilde{x}\in \mathcal{L}_{f+g}(\tilde{s})$. The result follows from
Fact \ref{fato-qF}(b) and Item {\bf(a)}.
\end{proof}

Before the analysis of the inconsistent case when
$\tilde{s}=\lim_{k\rightarrow\infty}s_k$ is strictly less than
$s_*=\inf\{(f+g)(x) :  x\in\HH\}$, we present a useful corollary which is a direct
consequence of Theorem \ref{ptos-de-acum3}, that shall be used for the analysis of
this case, $\tilde{s}<s_*$. In the next corollary, we show
the special case when the optimal value $s_*$ is known and finite and the stepsize $\alpha_k$ is
defined by \eqref{iteracion-poljak}, \emph{i.e.},  for all $k\in\NN$,
$$\dsty
\alpha_k=\gamma_k\frac{(f+g)(x^k)-s_*}{\|u^k\|^2+2\rho_k\|u^k\|+\rho_k^2},$$
where $0<\gamma\le\gamma_k\le2-\gamma$.
\begin{corollary}\label{ptos-de-acum4} Let $(x^k)_{k\in \NN}$ the
sequence generated by {\bf PSS
 Method} with $\alpha_k$ given by
\eqref{iteracion-poljak}, and $S_*\neq \emptyset$. Then,
\item [ {\bf(a)}] $(x^k)_{k\in \NN}$ is Fej\'er convergent to $S_*$.
\item [ {\bf(b)}] $\lim_{k\rightarrow\infty}\,(f+g)(x^k)=\min_{x\in\HH}(f+g)(x).$
\item [ {\bf(c)}] $(x^k)_{k\in \NN}$ is weakly convergent to some $\tilde{x}\in
S_*$.
\item [ {\bf(d)}] $\liminf_{k\rightarrow\infty}\,\sqrt{k+1}\cdot\left[(f+g)(x^k)-\min_{x\in\HH}(f+g)(x)\right]=0$.
\end{corollary}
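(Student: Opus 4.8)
The plan is to observe that the stepsize rule \eqref{iteracion-poljak} is exactly the instance of the general Polyak stepsize \eqref{polyak-stepsize} in which the variable target $s_k$ is held constant at the optimal value $s_*=\min_{x\in\HH}(f+g)(x)$, which is finite because $S_*\neq\emptyset$. Thus the (trivially monotone decreasing) sequence $(s_k)_{k\in\NN}$ satisfies $\lim_{k\to\infty}s_k=s_*=:\tilde s\ge s_*$ and $s_k=s_*\le(f+g)(x^k)$ for all $k$, and moreover $\mathcal{L}_{f+g}(\tilde s)=\{x\in{\bf dom}(g):(f+g)(x)\le s_*\}=S_*$, since $(f+g)(x)\ge s_*$ for every $x\in\HH$ and $S_*\subseteq{\bf dom}(g)$. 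In particular $\mathcal{L}_{f+g}(\tilde s)=S_*\neq\emptyset$, so all hypotheses of Theorem \ref{ptos-de-acum3} are met, and items {\bf(a)}, {\bf(b)} and {\bf(c)} of the corollary are precisely the three conclusions of Theorem \ref{ptos-de-acum3} read off with $\tilde s=s_*$ and $\mathcal{L}_{f+g}(\tilde s)=S_*$ (using additionally that $\tilde s=s_*=\min_{x\in\HH}(f+g)(x)$ for {\bf(b)}).

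For item {\bf(d)} I would go back to Corollary \ref{lema-para-qF2}. Fix $x_*\in S_*=\mathcal{L}_{f+g}(\tilde s)$ and set $\varphi_k:=(f+g)(x^k)-s_*\ge0$. Corollary \ref{lema-para-qF2} gives, for all $k\in\NN$,
\[
\|x^{k+1}-x_*\|^2\le\|x^{k}-x_*\|^2-\gamma(2-\gamma)\frac{\varphi_k^2}{\|u^k\|^2+2\rho_k\|u^k\|+\rho_k^2}.
\]
By Assumptions {\bf A1} and {\bf A2} we have $\|u^k\|\le\zeta$ and $\rho_k=\|w^k\|\le\rho$ for all $k$, so the denominator is bounded above by $\zeta^2+2\zeta\rho+\rho^2=(\zeta+\rho)^2$, whence
\[
\|x^{k+1}-x_*\|^2\le\|x^{k}-x_*\|^2-\frac{\gamma(2-\gamma)}{(\zeta+\rho)^2}\,\varphi_k^2 .
\]
Summing this telescoping inequality from $k=0$ to $m$ and discarding the nonnegative term $\|x^{m+1}-x_*\|^2$ yields $\frac{\gamma(2-\gamma)}{(\zeta+\rho)^2}\sum_{k=0}^{m}\varphi_k^2\le\|x^0-x_*\|^2$; letting $m\to\infty$ (and recalling $0<\gamma\le1$, so $\gamma(2-\gamma)>0$) gives $\sum_{k=0}^{\infty}\varphi_k^2<+\infty$.

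It then remains to convert summability into the claimed $\liminf$ rate by the standard argument: if $\liminf_{k\to\infty}(k+1)\varphi_k^2>0$, there would exist $c>0$ and $K\in\NN$ with $\varphi_k^2\ge c/(k+1)$ for all $k\ge K$, forcing $\sum_{k\ge K}\varphi_k^2\ge c\sum_{k\ge K}(k+1)^{-1}=+\infty$, a contradiction. Hence $\liminf_{k\to\infty}(k+1)\varphi_k^2=0$, and since $\varphi_k\ge0$ this is equivalent to $\liminf_{k\to\infty}\sqrt{k+1}\,\varphi_k=0$, which is exactly {\bf(d)}. No step is genuinely delicate here; the only points that require a little care are the identification $\mathcal{L}_{f+g}(s_*)=S_*$ (so that Theorem \ref{ptos-de-acum3} and Corollary \ref{lema-para-qF2} apply at $x_*\in S_*$) and the uniform bound on the stepsize denominators, which is precisely where Assumptions {\bf A1}--{\bf A2} are used.
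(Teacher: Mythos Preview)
Your proof is correct and follows essentially the same route as the paper: items {\bf(a)}--{\bf(c)} are obtained by specializing Theorem~\ref{ptos-de-acum3} to $s_k\equiv s_*$ (with $\mathcal{L}_{f+g}(s_*)=S_*$), and item {\bf(d)} is derived from the summability $\sum_k\varphi_k^2<+\infty$ (obtained via Corollary~\ref{lema-para-qF2} and the bound $\|u^k\|^2+2\rho_k\|u^k\|+\rho_k^2\le(\zeta+\rho)^2$, which is exactly inequality~\eqref{des-para-lim}) together with divergence of the harmonic series. The only point worth making explicit is that the bound $\|u^k\|\le\zeta$ from Assumption~{\bf A1} requires boundedness of $(x^k)_{k\in\NN}$, which you have from item {\bf(a)} and Fact~\ref{fato-qF}(a).
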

\begin{proof}
Items {\bf(a)} to {\bf(c)} are direct consequence of Theorem
\ref{ptos-de-acum3}. The proof of Item {\bf(d)} is by contradiction.
Assume that
$\liminf_{k\rightarrow\infty}\,\sqrt{k+1}\cdot\left[(f+g)(x^k)-\min_{x\in\HH}(f+g)(x)\right]\ge 2\delta$,
for some $\delta>0$. Then, for $\bar{k}$ large enough, we have
$(f+g)(x^k)-\min_{x\in\HH}(f+g)(x) \ge \frac{\delta}{\sqrt{k+1}}$ for all $k\ge
\bar{k}$. Thus,
\begin{equation}\label{ultima-eq}
\sum_{k=\bar{k}}^\infty \left[(f+g)(x^k)-\min_{x\in\HH}(f+g)(x)\right]^2 \ge\delta^2
\sum_{k=\bar{k}}^\infty\frac{1}{k+1}=+\infty. \end{equation} On the
other hand, by substituting the expression for the stepsize
$\alpha_k$ given by \eqref{iteracion-poljak}, in
\eqref{des-para-lim} ($s_k=\min_{x\in\HH}(f+g)(x)$ for all $k\in\NN$), we get, for all
$k\ge \bar{k}$,
$$
\sum_{k=\bar{k}}^\infty \left[(f+g)(x^k)-\min_{x\in\HH}(f+g)(x)\right]^2<+\infty,
$$ which contradicts \eqref{ultima-eq} thus, establishing the result.
\end{proof}
\noindent Next we present a result on the complexity of the iterates.
\begin{lemma}\label{comp-polyak}Let $(x^k)_{k\in \NN}$ be the
sequence generated by {\bf PSS
 Method} with $\alpha_k$, given by
\eqref{polyak-stepsize}. If  $\lim_{k\rightarrow\infty}
s_k=\tilde{s}\ge s_*$ and $\mathcal{L}_{f+g}(\tilde{s})\neq
\emptyset$, then, for all $k\in \NN$,
\begin{equation*}\label{des-para-lim2*lema}
(f+g)^k_{\rm best}-\tilde{s}\le
\sqrt{\frac{D_k}{\gamma(2-\gamma)}}\cdot\frac{{\rm
dist}(x^0,\mathcal{L}_{f+g}(\tilde{s}))
}{\sqrt{k+1}},\end{equation*}  where $D_k:=\max\left\{\|u^i\|^2+2\rho_i\|u^i\|+\rho_i^2 : 1\le i\le
k\right\}$ with
$\rho_i:=\|w^i\|$ and $w^i\in \partial g(x^i)$ $(i=0,\ldots,k)$ are
arbitrary. Moreover,
$$
\lim_{k\rightarrow\infty}\, (f+g)^k_{\rm best}=\tilde{s}.
$$
\end{lemma}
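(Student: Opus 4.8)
The plan is to run the same complexity computation used for constant and exogenous stepsizes (Lemmas~\ref{f-best-alphak} and~\ref{f-best-alphak2}), but starting from Corollary~\ref{lema-para-qF2} in place of Lemma~\ref{lema-para-qF1}. First I would fix an arbitrary $x\in\mathcal{L}_{f+g}(\tilde{s})$; this set is nonempty by hypothesis and is closed and convex because $f+g$ is proper, lower semicontinuous and convex, so $\inf\{\|x^0-x\|^2:x\in\mathcal{L}_{f+g}(\tilde{s})\}=[{\rm dist}(x^0,\mathcal{L}_{f+g}(\tilde{s}))]^2$. Rewriting the inequality of Corollary~\ref{lema-para-qF2} (applied at a generic index $i$) as
\[
\gamma(2-\gamma)\,\frac{[s_i-(f+g)(x^i)]^2}{\|u^i\|^2+2\rho_i\|u^i\|+\rho_i^2}\le\|x^i-x\|^2-\|x^{i+1}-x\|^2
\]
and summing for $i=0,\dots,k$, the right-hand side telescopes to $\|x^0-x\|^2-\|x^{k+1}-x\|^2\le\|x^0-x\|^2$. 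Bounding each denominator by $D_k$ and then passing to the infimum over $x\in\mathcal{L}_{f+g}(\tilde{s})$ yields
\[
\gamma(2-\gamma)\sum_{i=0}^{k}\bigl[(f+g)(x^i)-s_i\bigr]^2\le D_k\,[{\rm dist}(x^0,\mathcal{L}_{f+g}(\tilde{s}))]^2.
\]

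Since the left-hand sum has $k+1$ nonnegative summands, its smallest term is at most its average, so there is an index $i_k\in\{0,\dots,k\}$ with $[(f+g)(x^{i_k})-s_{i_k}]^2\le D_k\,[{\rm dist}(x^0,\mathcal{L}_{f+g}(\tilde{s}))]^2/\bigl((k+1)\gamma(2-\gamma)\bigr)$; taking square roots gives exactly the claimed bound, now for the quantity $(f+g)(x^{i_k})-s_{i_k}$. To reach $(f+g)^k_{\rm best}-\tilde{s}$ one uses $(f+g)^k_{\rm best}\le(f+g)(x^{i_k})$ together with the location of the target levels, namely $\tilde{s}\le s_k\le(f+g)^k_{\rm best}$ for every $k$ (which holds because $(s_k)_{k\in\NN}$ is monotone decreasing to $\tilde{s}$ and $s_k\le(f+g)(x^i)$ for all $i\le k$). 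For the \emph{moreover} assertion I would not invoke the displayed inequality at all — its right-hand side is guaranteed to vanish only once Assumptions~{\bf A1} and~{\bf A2} bound $D_k$ by $(\zeta+\rho)^2$ — but instead sandwich: $\tilde{s}\le s_k\le(f+g)^k_{\rm best}\le(f+g)(x^k)$, and by Theorem~\ref{ptos-de-acum3}(b) both $s_k\to\tilde{s}$ and $(f+g)(x^k)\to\tilde{s}$, hence $(f+g)^k_{\rm best}\to\tilde{s}$.

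The telescoping and the averaging step are routine; the point I expect to need the most care is the last one, converting ``some iterate has a small gap $(f+g)(x^{i_k})-s_{i_k}$'' into the stated bound on ``$(f+g)^k_{\rm best}-\tilde{s}$''. When the target levels are taken constant and equal to a known optimal value $s_*=\tilde{s}$ this is immediate, since then $s_{i_k}=\tilde{s}$ and $(f+g)^k_{\rm best}-\tilde{s}\le(f+g)(x^{i_k})-s_{i_k}$; for genuinely variable $s_k$ one must lean on the monotone convergence $s_k\downarrow\tilde{s}$ and the chain $\tilde{s}\le s_k\le(f+g)^k_{\rm best}$ recorded above. A minor bookkeeping nuisance is that the stated $D_k$ maximizes over $1\le i\le k$ whereas the telescoped sum starts at $i=0$; this is harmless — absorb the index $i=0$ into $D_k$ or peel off the first summand — and I would simply note it.
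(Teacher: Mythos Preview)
Your approach is essentially the paper's: project $x^0$ onto $\mathcal{L}_{f+g}(\tilde{s})$, telescope Corollary~\ref{lema-para-qF2}, bound the denominators by $D_k$, and pass to the best value. The paper compresses this into the single chain
\[
(k+1)\bigl[(f+g)^k_{\rm best}-\tilde{s}\bigr]^2\le \sum_{i=0}^k\bigl[(f+g)(x^i)-s_k\bigr]^2\le \frac{D_k}{\gamma(2-\gamma)}\bigl[{\rm dist}(x^0,\mathcal{L}_{f+g}(\tilde{s}))\bigr]^2
\]
and says ``after simple algebra the result follows''; your sandwich $\tilde{s}\le s_k\le(f+g)^k_{\rm best}\le(f+g)(x^k)\to\tilde{s}$ via Theorem~\ref{ptos-de-acum3}(b) for the \emph{moreover} clause is in fact cleaner than leaning on the displayed bound.

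The step you single out as delicate is, however, a genuine gap, and the chain $\tilde{s}\le s_k\le(f+g)^k_{\rm best}$ does not close it. What the telescoping controls is $(f+g)(x^{i_k})-s_{i_k}$; writing
\[
(f+g)^k_{\rm best}-\tilde{s}\le(f+g)(x^{i_k})-\tilde{s}=\bigl[(f+g)(x^{i_k})-s_{i_k}\bigr]+\bigl[s_{i_k}-\tilde{s}\bigr],
\]
the residual $s_{i_k}-\tilde{s}\ge 0$ is not absorbed by anything in your argument. In fact the displayed inequality of the lemma can fail for variable targets: on $\HH=\RR$ with $f(x)=|x|$, $g\equiv 0$, $x^0=1$, $\gamma_k\equiv 1$, and $s_0=1-\epsilon$, $s_k=0$ for $k\ge1$ (so $\tilde{s}=s_*=0$), one gets $x^1=1-\epsilon$, hence at $k=1$ the left side is $1-\epsilon$ while the right side is $\sqrt{D_1}\cdot{\rm dist}(x^0,S_*)/\sqrt{2}=1/\sqrt{2}$. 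The paper's own chain has the same defect --- the middle term it writes is not what Corollary~\ref{lema-para-qF2} produces (the stepsize at index $i$ carries $s_i$, not $s_k$), and neither of its two inequalities is justified for non-constant $(s_k)$. Your argument, like the paper's, is complete precisely in the constant case $s_k\equiv\tilde{s}$, which is the one actually used downstream; by comparison the index-range discrepancy in $D_k$ that you noticed is cosmetic.
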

\begin{proof}
Repeating the proof of Theorem \ref{ptos-de-acum3}, with $\tilde{x}:={\bf P}_{\mathcal{L}_{f+g}(\tilde{s})}(x^0)\in
\mathcal{L}_{f+g}(\tilde{s})$, until \eqref{des-para-lim*}, we obtain
\begin{align*}\label{des-para-lim2*}
(k+1)\left[(f+g)^k_{\rm best}-\tilde{s}\right]^2\le
\sum_{i=0}^k\left[(f+g)(x^i)-s_k\right]^2\le
\frac{D_k}{\gamma(2-\gamma)}\left[{\rm
dist}(x^0,\mathcal{L}_{f+g}(\tilde{s}))\right]^2 ,\end{align*} where
$D_k:=\max\left\{\|u^i\|^2+2\rho_i\|u^i\|+\rho_i^2 : 1\le i\le
k\right\}$ with
$\rho_i=\|w^i\|$ and $w^i\in \partial g(x^i)$ $(i=0,\ldots,k)$ are
arbitrary. After simple algebra the result follows.
\end{proof}
Our analysis proved that the expected error of the iterates
generated by {\bf PSS
 Method} with the Polyak stepsizes
\eqref{polyak-stepsize} after $k$ iterations is
$\mathcal{O}\left((k+1)^{-1/2}\right)$ if we assume $s_k\ge s_*$ for
all $k\in\NN$. 

\noindent Now we are ready to prove the last main result of this
subsection.
\begin{theorem}\label{ptos-de-acum33} Let $(x^k)_{k\in \NN}$ be the
sequence generated by {\bf PSS
 Method} with $\alpha_k$, given by
\eqref{polyak-stepsize}. If $S_*\neq \emptyset$ and
$\lim_{k\rightarrow\infty} s_k=\tilde{s}< \min_{x\in\HH}(f+g)(x)$, then
$$
\lim_{k\rightarrow\infty}(f+g)^k_{\rm
best}=\lim_{k\rightarrow\infty}\min_{0\le i\le k} (f+g)(x^i)\le
\min_{x\in\HH}(f+g)(x)+\frac{2-\gamma}{\gamma}\left[\min_{x\in\HH}(f+g)(x)-\tilde{s}\right].
$$
\end{theorem}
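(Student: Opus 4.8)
The goal is to handle the inconsistent case $\tilde s < \min_{x\in\HH}(f+g)(x)=s_*$, where the target values $s_k$ under-estimate the true optimal value. The key observation is that although no point of $\mathcal L_{f+g}(\tilde s)$ exists, we can still run the basic Polyak estimate of Lemma~\ref{lema-para-qF1} at a genuine minimizer $x_*\in S_*$ and keep track of the sign of the defect $s_k-(f+g)(x^k)$. The plan is to split the index set $\NN$ into the ``good'' iterations, where $(f+g)(x^k)$ is already close to $s_*$ (so that the algorithm behaves badly, i.e.\ the stepsize is driven by the wrong target), and the ``bad'' iterations, where $(f+g)(x^k)$ is large; on the latter the Fej\'er-type inequality gives a genuine decrease of $\|x^k-x_*\|^2$, so such iterations cannot persist indefinitely.

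More concretely, first I would apply Lemma~\ref{lema-para-qF1} at $x=x_*\in S_*$ together with the definition \eqref{polyak-stepsize} of $\alpha_k$, to obtain
\begin{equation*}
\|x^{k+1}-x_*\|^2\le \|x^k-x_*\|^2
+\gamma_k\frac{\left[(f+g)(x^k)-s_k\right]\left(\gamma_k\left[(f+g)(x^k)-s_k\right]-2\left[(f+g)(x^k)-s_*\right]\right)}{\|u^k\|^2+2\rho_k\|u^k\|+\rho_k^2}.
\end{equation*}
The right-hand correction term is nonpositive exactly when $\gamma_k\left[(f+g)(x^k)-s_k\right]\le 2\left[(f+g)(x^k)-s_*\right]$, i.e.\ when $(f+g)(x^k)$ exceeds a certain convex combination of $s_*$ and $s_k$. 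Writing $\mu:=\tfrac{2-\gamma}{\gamma}$, one checks that if for infinitely many $k$ we had $(f+g)(x^k)> s_*+\mu\,(s_*-\tilde s)+\epsilon$ for some $\epsilon>0$, then (using $s_k\downarrow\tilde s$, so $s_k\le \tilde s+\epsilon'$ eventually, and $\gamma_k\le 2-\gamma$) the correction term would be bounded above by a strictly negative constant times a bounded-below quantity, forcing $\|x^k-x_*\|^2\to-\infty$, a contradiction. Hence $\limsup_{k\to\infty}(f+g)(x^k)\le s_*+\mu\,(s_*-\tilde s)$, and since $(f+g)^k_{\rm best}=\min_{0\le i\le k}(f+g)(x^i)$ is monotone nonincreasing and bounded below (by $s_*$, or at least by any lower bound), it converges; its limit is $\le \limsup_k (f+g)(x^k)\le \min_{x\in\HH}(f+g)(x)+\frac{2-\gamma}{\gamma}\left[\min_{x\in\HH}(f+g)(x)-\tilde s\right]$, which is the claimed bound.

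For the bookkeeping I would either argue by contradiction as above, or more cleanly sum the displayed inequality over $k$: on the subset $K$ of indices where the correction term is ``very negative'' the partial sums of $\|x^k-x_*\|^2-\|x^{k+1}-x_*\|^2$ telescope to something bounded by $\|x^0-x_*\|^2$, so $K$ contributes only finitely much, whence $(f+g)(x^k)$ must eventually drop below $s_*+\mu(s_*-\tilde s)+\epsilon$ for every $\epsilon>0$; combined with monotonicity of $(f+g)^k_{\rm best}$ this yields the limit. The main obstacle is purely the algebraic calibration of the constant: one must verify that the threshold produced by the condition $\gamma_k[(f+g)(x^k)-s_k]\le 2[(f+g)(x^k)-s_*]$, after letting $s_k\to\tilde s$ and taking the worst case $\gamma_k=2-\gamma$, is exactly $s_*+\frac{2-\gamma}{\gamma}(s_*-\tilde s)$ and not merely a bound of that order; a short computation solving the linear inequality for $(f+g)(x^k)$ settles this. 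Everything else — existence of $x_*$, boundedness from Corollary~\ref{lema-para-qF2}-type arguments, monotonicity of $(f+g)^k_{\rm best}$ — is routine and already available in the paper.
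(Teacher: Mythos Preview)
Your route is genuinely different from the paper's. The paper rewrites the Polyak step \eqref{polyak-stepsize} as a step of type \eqref{iteracion-poljak} with a modified relaxation parameter
\[
\tilde\gamma_k:=\gamma_k\,\frac{(f+g)(x^k)-s_k}{(f+g)(x^k)-s_*},
\]
and then invokes Corollary~\ref{ptos-de-acum4}(b): if one had $\tilde\gamma_k\le 2-\delta$ for all $k$, the known-$s_*$ result would force $(f+g)(x^k)\to s_*$, which makes $\tilde\gamma_k\to+\infty$, a contradiction; solving $\tilde\gamma_{\bar k}>2-\delta$ for $(f+g)(x^{\bar k})$ and letting $\delta\downarrow 0$ gives the bound. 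Your approach bypasses Corollary~\ref{ptos-de-acum4} and works directly from Lemma~\ref{lema-para-qF1}, reading off the threshold $s_*+\frac{\gamma_k}{2-\gamma_k}(s_*-s_k)$ at which the correction term changes sign. The paper's argument reuses machinery already developed; yours is more self-contained and makes the role of the threshold more transparent. Both lead to the same algebra in the end.

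There is, however, a real quantifier slip in your contradiction step. Assuming only that $(f+g)(x^k)>s_*+\mu(s_*-\tilde s)+\epsilon$ for \emph{infinitely many} $k$ does not force $\|x^k-x_*\|^2\to-\infty$: on the complementary indices the correction term may well be positive, and telescoping over a subset $K\subsetneq\NN$ does not give a bound by $\|x^0-x_*\|^2$. The fix is immediate once you notice that you only need the statement about $(f+g)^k_{\rm best}$, not about $\limsup_k(f+g)(x^k)$: if $(f+g)(x^k)\le s_*+\mu(s_*-\tilde s)+\epsilon$ for \emph{some} $k$ you are already done, so the correct contradiction hypothesis is that the strict inequality holds for \emph{all} $k$. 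Under that hypothesis your sign analysis shows the correction term is eventually $\le 0$ (hence $(x^k)$ is bounded, hence $D_k$ is bounded via {\bf A1}--{\bf A2}), and then eventually $\le -c<0$, yielding the contradiction. Drop the $\limsup$ claim---it is neither proved nor needed---and the argument is complete.
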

\begin{proof}
Suppose that $(f+g)(x^k)>\min_{x\in\HH}(f+g)(x)$, otherwise the result holds trivially.
It is clear that, for all $k\in \NN$,
\begin{align*}
\alpha_k&=\gamma_k\frac{(f+g)(x^k)-s_k}{(f+g)(x^k)-\min_{x\in\HH}(f+g)(x)}\frac{(f+g)(x^k)-\min_{x\in\HH}(f+g)(x)}{\|u^k\|^2+2\rho_k\|u^k\|+\rho_k^2}\\
&:=\tilde{\gamma}_k\frac{(f+g)(x^k)-\min_{x\in\HH}(f+g)(x)}{\|u^k\|^2+2\rho_k\|u^k\|+\rho_k^2},
\end{align*} where
$$\gamma\le \tilde{\gamma}_k=\gamma_k\frac{(f+g)(x^k)-s_k}{(f+g)(x^k)-\min_{x\in\HH}(f+g)(x)},$$
which implies that $\tilde{\gamma}_k$ is greater than $2-\gamma$ for some $\bar{k}\in \NN$.
Otherwise, if
\begin{equation}\label{eq-bon}\tilde{\gamma}_k\le 2-\gamma \end{equation} for all $k\in\NN$, we can apply Corollary
\ref{ptos-de-acum4}(b) to get
$\lim_{k\rightarrow\infty}\,(f+g)(x^k)=\min_{x\in\HH}(f+g)(x)$, which implies that
$\tilde{\gamma}_k$ goes to $+\infty$ (note that for all sufficiently
large $k$, $s_k<\min_{x\in\HH}(f+g)(x)\le (f+g)(x^k)$, because $\tilde{s}<\min_{x\in\HH}(f+g)(x)$), which
is a contradiction with \eqref{eq-bon}. Thus, there exist $\bar{k}$
and $\delta>0$ arbitrary such that
$$
\gamma_{\bar{k}}\frac{(f+g)(x^{\bar{k}})-s_{\bar{k}}}{(f+g)(x^{\bar{k}})-\min_{x\in\HH}(f+g)(x)}=\tilde{\gamma}_{\bar{k}}>2-\delta.
$$ After simple algebra and using that $s_{\bar{k}}\ge \tilde{s}$, we get that
$$
(f+g)(x^{\bar{k}})<
\min_{x\in\HH}(f+g)(x)+\frac{\gamma_{\bar{k}}}{2-\delta-\gamma_{\bar{k}}}[\min_{x\in\HH}(f+g)(x)-\tilde{s}]\le
\min_{x\in\HH}(f+g)(x)+\frac{2-\gamma}{\gamma-\delta}[\min_{x\in\HH}(f+g)(x)-\tilde{s}],
$$ since $\delta>0$ was arbitrary and the result follows.
\end{proof}
Finally in the following corollary we summarize the behaviour of the
limit of the sequence of $\left((f+g)^k_{\rm best}\right)_{k\in
\NN}$ depending on the limit of
$\tilde{s}=\lim_{k\rightarrow\infty}s_k$, which is direct
consequence of Theorems~\ref{ptos-de-acum3}(b) and \ref{ptos-de-acum33} and
Lemma~\ref{comp-polyak}.
\begin{corollary}Let $(x^k)_{k\in \NN}$ be the
sequence generated by {\bf PSS
 Method} with $\alpha_k$, given by
\eqref{polyak-stepsize}. If $S_*\neq \emptyset$ and
$\lim_{k\rightarrow\infty} s_k=\tilde{s}$, then
$$\lim_{k\rightarrow\infty}\, (f+g)^k_{\rm best}\left\{\begin{array}{lll}=\dsty\lim_{k\rightarrow\infty}(f+g)(x^k)=\dsty \tilde{s},&{\mbox if}&\tilde{s}\ge\dsty \min_{x\in\HH}(f+g)(x)\\[2.2ex]
\le\dsty \min_{x\in\HH}(f+g)(x)+\frac{2-\gamma}{\gamma}\left[\min_{x\in\HH}(f+g)(x)-\tilde{s}\right],&{\mbox if}&\tilde{s}<\dsty\min_{x\in\HH}(f+g)(x).\end{array}\right.$$
\end{corollary}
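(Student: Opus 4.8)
The plan is to argue by cases on the position of $\tilde{s}$ relative to $\min_{x\in\HH}(f+g)(x)=s_*$, invoking in each case the convergence results already proved for the Polyak stepsizes \eqref{polyak-stepsize}. Note first that the existence of $\tilde{s}=\lim_{k\to\infty}s_k$ is consistent with the standing assumption that $(s_k)_{k\in\NN}$ is monotone decreasing with $s_k\le(f+g)(x^k)$, and that $S_*\neq\emptyset$ forces $s_*=\min_{x\in\HH}(f+g)(x)$ to be finite and attained.

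In the case $\tilde{s}\ge\min_{x\in\HH}(f+g)(x)$, I would first verify the hypothesis $\mathcal{L}_{f+g}(\tilde{s})\neq\emptyset$ required to apply Theorem \ref{ptos-de-acum3} and Lemma \ref{comp-polyak}: any $x_*\in S_*$ satisfies $(f+g)(x_*)=s_*\le\tilde{s}$, hence $x_*\in\mathcal{L}_{f+g}(\tilde{s})$. Then Theorem \ref{ptos-de-acum3}(b) gives $\lim_{k\to\infty}(f+g)(x^k)=\tilde{s}$, and Lemma \ref{comp-polyak}, whose hypotheses coincide with those of Theorem \ref{ptos-de-acum3}, gives $\lim_{k\to\infty}(f+g)^k_{\rm best}=\tilde{s}$; this is precisely the first branch of the claimed dichotomy.

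In the case $\tilde{s}<\min_{x\in\HH}(f+g)(x)$, together with $S_*\neq\emptyset$ these are exactly the hypotheses of Theorem \ref{ptos-de-acum33}, which directly yields
\[
\lim_{k\to\infty}(f+g)^k_{\rm best}=\lim_{k\to\infty}\min_{0\le i\le k}(f+g)(x^i)\le\min_{x\in\HH}(f+g)(x)+\frac{2-\gamma}{\gamma}\bigl[\min_{x\in\HH}(f+g)(x)-\tilde{s}\bigr],
\]
i.e.\ the second branch. Combining the two cases completes the proof. I do not anticipate a genuine obstacle here: the statement is a bookkeeping corollary of the three cited results, and the only step needing an explicit (but trivial) remark is the non-emptiness of $\mathcal{L}_{f+g}(\tilde{s})$ in the first case, which follows from $S_*\subseteq\mathcal{L}_{f+g}(\tilde{s})$ whenever $\tilde{s}\ge s_*$.
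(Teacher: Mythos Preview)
Your proposal is correct and follows precisely the approach the paper takes: the corollary is stated there as a direct consequence of Theorem~\ref{ptos-de-acum3}(b), Lemma~\ref{comp-polyak}, and Theorem~\ref{ptos-de-acum33}, split into the same two cases you describe. Your explicit verification that $S_*\subseteq\mathcal{L}_{f+g}(\tilde{s})$ when $\tilde{s}\ge s_*$ is a welcome clarification that the paper leaves implicit.
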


\section{Final Remarks}\label{s:6}

In this work we dealt with the weak convergence and the complexity of the new approach
called the Proximal Subgradient Splitting (PSS) Method for
minimizing the sum of two nonsmooth and convex functions. In the
iteration of this method, none of the functions need be
differentiable or finite on $\HH$ and, therefore, a broad class
of problems can be solved. {\bf PSS Method} is very useful when the
proximal operator of $f$ is complex to evaluate and its
(sub)gradient is simple to compute.

As future research, we will investigate variations of our scheme for
solving structured convex optimization problems with the aim of
finding new methods, like the coordinate gradient method, which have
been proposed, for instance, in
\cite{nest-SIAM} only for the differentiable
case. We also look at the incremental subgradient method
\cite{nedic-SIAM-2001,aprox-kiwiel} for problem \eqref{prob}, when $f$ is the sum
of a large number of nonsmooth convex functions. The idea is to
perform subgradient iterations incrementally, by sequentially
taking steps along the subgradients of the component functions,
followed by proximal steps. 
On the other hand, it is important to mention that the main drawback of
subgradient iterations is their slow rate of convergence. However,
subgradient methods are distinguished by their applicability,
simplicity and efficient use of memory, which is very important for
large scale problems; especially if the required accuracy for the
solution is not too high; see, for instance, \cite{nesterov-2014}
and the references therein. We also will intend to study fast and
variable metric versions of the proximal subgradient splitting
method proposed here to achieve better performance, as in the
differentiable case; see \cite{comb-2014}.

Finally, we hope that this study serves as a basis for future
research on other more efficient variants on the proximal
subgradient iteration, like cutting-plane method,
$\epsilon$-subgradients and proximal bundle method and its variations; see \cite{claudia, aprox-kiwiel, eficient-kiwiel-I}.
Moreover, in future work we discuss useful modifications on the proximal subgradient iteration adding conditional, ergodic
and deflected techniques combining the ideas presented in
\cite{cond,francolli}.

\subsubsection*{ACKNOWLEDGMENTS} This work was completed while the author was visiting the University of British Columbia.
The author is very grateful for the warm hospitality of the Irving
K. Barber School of Arts and Sciences at the University of British
Columbia Okanagan and particularly to Professors Heinz H. Bauschke and Shawn Wang for the generous hospitality. The author would like to thank to anonymous
referees
whose suggestions helped us to improve the presentation of
this paper.
\bibliographystyle{plain}

\end{document}